\newtheorem{Theorem}{Theorem}
\newtheorem{MainTheorem}[Theorem]{Main Theorem}
\newtheorem{Lojasiewicz Join Theorem}[Theorem]{\L ojasiewicz Join Theorem}
\newtheorem{Lemma}[Theorem]{Lemma}
\newtheorem{Corollary}[Theorem]{Corollary}
\newtheorem{Proposition}[Theorem]{Proposition}
\newtheorem{Definition}[Theorem]{Definition}
\newtheorem{Remark}[Theorem]{Remark}
\newtheorem{Example}[Theorem]{Example}
\newtheorem{ Observation}[Theorem]{Observation}
\newtheorem{Assertion}[Theorem]{Assertion}
\newcommand\vphi{\varphi}
\newcommand\al{\alpha}
\newcommand\be{\beta}
\newcommand\Ga{\Gamma}
\newcommand\De{\Delta}
\newcommand\bfz{\mbox {\bf  z}}
\newcommand\nl{\newline}
\newcommand\LS{\rm{LS}\/}
\newcommand\wt{\rm{wt}\/}
\newcommand{{{\ord}}}{\rm{ord}\/}
\newcommand\Var{\rm{Var}\/}
\newcommand\Int{\rm{Int}\/}
\newcommand\Cone{\rm{Cone}\/}
\newcommand\inv{^{-1}}
\def\inv{^{-1}}
\begin{document}
\title[\L ojasiewicz exponents of a certain analytic  functions ]
{\L ojasiewicz exponents of  a certain analytic functions}

\author
[M. Oka ]
{Mutsuo Oka }
\address{Department of Mathematics,
{Tokyo  University of Science,}
{Kagurazaka 1-3, Shinjuku-ku,}
{Tokyo 162-8601}
}
\email { oka@rs.tus.ac.jp}
\keywords {\L ojasiewicz exponent, inv-tame}
\subjclass[2000]{32S05}

\begin{abstract}
We consider  the exponent of  \L ojasiewicz inequality  $\|\partial\,f(\mathbf z)\| \ge  c |f(\mathbf z|^\theta$ for two classes of analytic functions and we will give an explicit estimation for $\theta$.
First   we  consider   certain non-degenerate functions which is not  convenient.
In \S 3.4, we give an example of a polynomial for which   $\theta_0(f)$ is not constant on the moduli space and in  \S 3.5,  we show that the behaviors of  the \L ojasiewicz exponents  is not similar as  the Milnor numbers by an example.

 In the last section (\S 4), we give also an estimation for    product functions $f(\mathbf z)=f_1(\mathbf z)\cdots f_k(\mathbf z)$ associated to a  family of a certain convenient non-degenerate complete intersection varieties.  In either class, the singularity is   not isolated.
We will give  explicit estimations of the \L ojasiewicz exponent $\theta_0(f)$ using   combinatorial data of the Newton boundary of $f$.  We generalize this estimation for non-reduced function
$g=f_1^{m_1}\cdots f_k^{m_k}$. 
\end{abstract}
\maketitle

\maketitle

\section{Holomorphic functions
 and \L ojasiewicz exponents}
Consider a germ of an analytic function $f(\bfz)$ at the origin. There are two type of inequalities which are shown by S. \L ojasiewicz \cite{L1,L2}.  
\begin{eqnarray}
\|\partial f(\mathbf z)\|&\ge& c |f(\mathbf z)|^\theta,\, c\ne 0,  \,0 \le \exists\theta<1  \label{Loj-ineq},\,\forall\mathbf z\in U,\\
\|\partial f(\mathbf z)\|&\ge& c \|\mathbf z\|^\eta,\,c\ne 0,\, \exists\eta>0,\, \forall\mathbf z\in U
\end{eqnarray}
where $U$ is a sufficiently small neighborhood of the origin. 
Here $\partial f(z)$ is the gradient vector
$(\frac{\partial f}{\partial z_1},\dots,\frac{\partial f}{\partial z_n})$.
These equalities hold in a sufficiently small neighborhood of the origin.
For the second inequality, $f(\mathbf z)$ must have an isolated singularity at the origin.
In our previous paper \cite{O-Lo}, we considered the exponent of the second \L ojasiewicz inequality 
 for a non-degenerate  holomorphic function $f(\mathbf z)$ 
(or a mixed function  $f(\mathbf z,\bar{\mathbf z})$) with an isolated singularity at the origin.
For further information about  \L ojasiewicz inequality (2), we refer \cite{L1,L2,Ab1,Ab2, Br,B-K-O,Len,O-Lo,Oleksik,Oleksik2}. 

In this paper,  we are interested  in the exponent of the type (1)  for a certain type of holomorphic functions which may have non-isolated singularities at the origin. 
 This  inequality has been   originally studied by \L ojasiewicz    \cite{L1,L2} and then  
 by many other  authors.  Most of the researches  have been concentrated for the existence of the inequality in more general setting.
For example,  in the papers \cite{Loi,Kurd, K-P}, the authors show the existence of \L ojasiewicz inequality  for the o-minimal situation. 
We study the exponent of the inequality  (1) for a certain type of holomorphic functions which may have non-isolated singularities at the origin. 
They are either non-degenerate functions or  the product of convenient non-degenerate polynomials associated to a non-degenerate complete intersection variety.
The existence of \L ojasiewicz inequality is well-known for holomorphic functions (\cite{L1,L2}). 
We are interested in the vanishing  speed of the gradient vectors
$\partial f(\mathbf z)$ near the origin   in  comparison with that of the absolute 
value of $f$ when $\mathbf z$ goes to the origin. Thus
we are mostly   interested in the best possible $\theta$ which satisfies (1). This number is    the infinimum of  $\theta$'s which satisfy (1)  and we denote it by $\theta_0(f)$
 hereafter.

 Let $\mathbf z(t)$ be an analytic curve with $\mathbf z(0)=\mathbf 0$ and $\mathbf z(t)\in \mathbb C^n\setminus f\inv(0)$ for $t\ne 0$. 
Then  we compare the order of the both side of 
(1), after substituting $\mathbf z=\mathbf z(t)$ to  get the inequality:
\begin{eqnarray*}
 &{{{\ord}}}_t\|\partial f(\mathbf z(t))\|\le \theta\times  {{{\ord}}}_t\,{f(\mathbf z(t))}\\
 \end{eqnarray*}
 or equivalently 
 \begin{eqnarray*}\label{racio}
   \dfrac{{{{\ord}_t}}\,\|\partial f(\mathbf z(t))\|}{{{{\ord}_t}}f(\mathbf z(t))}\le \theta.\quad
 \end{eqnarray*}
Using the Curve Selection Lemma (\cite{Milnor,Hamm}), $ \theta_0(f)$  can be understood as 
 the supremum of the left side ratios  of the above inequality for all possible such curves $\mathbf z(t)$ and we call it  {\em the  \L ojasiewicz exponent} of the function $f$ for the \L ojasiewicz inequality
(1).

\section{Non-degenerate hypersurfaces} 
\subsection{Dual Newton diagram} We consider an  analytic function  (or a polynomial) 
\[
f(\mathbf z)=\sum_{\nu} c_\nu \mathbf z^\nu\]
 defined in the neighborhood of the origin. Recall that the Newton polyhedron  $\Ga_+(f)$ is the convex hull of 
the union $\bigcup_{\nu,c_\nu\ne 0} \,(\nu+\mathbb R_+^n)$.  
The Newton boundary $\Ga(f)$ is defined  by the union of compact faces of $\Ga_+(f)$.
 Let $N^+\subset \mathbb R^n$ be the space of non-negative weight vectors.
That is, a weight vector $P=(p_1,\dots,p_n)$ is in $ N^+$ if and only if $p_i\ge 0$. It defines linear function $\ell_P$ of the Newton polyhedron 
$\Ga_+(f)$ by  $\ell_P(\nu)=\sum_{i=1}^n p_i\nu_i$ for $\nu\in \Ga_+(f)$.
Its minimal value is denoted by $d(P,f)$
and the face of $\Ga_+(f)$, where this minimal value is taken,  is denoted as $\De(P,f)$. If no ambiguity is likely, we simply denote it as $d(P)$ and $\De(P)$. 
We recall  an equivalence relation in $N^+$ which gives a polyhedral conical structure  in $N^+$.
Two weight vectors $P,Q$ are {\em equivalent} if and only if $\De(P)=\De(Q)$ and this equivalence gives a conical 
polyhedral subdivision of $N^+$ which we call {\em the dual Newton diagram} and denote it as $\Ga^*(f)$.
The equivalence class of $P$ is denoted as $[P]$.

\subsection{Vanishing and non-vanishing weight vectors}
For $I\subset\{1,\dots, n\}$, we put $\mathbb C^I=\{\mathbf z\,|\, z_j=0,\forall j\not\in I\}$. Thus $\mathbb C^I\subset \mathbb C^n$.
The  subspace $\mathbf C^I$ is called {\em  a vanishing coordinate subspace} if $f^I\equiv 0$. 
Note that $f^I$ is the restriction of $f$ to $\mathbb C^I$.
Consider a weight vector $P=(p_1,\dots, p_n)$.  Put $I(P):=\{i\,|\, p_i=0\}$.  Assume that $I(P)\ne \emptyset$.
A weight vector $P$ is called {\em a vanishing weight vector} (respectively {\em non-vanishing weight vector} 
if  $d(P)>0$ (resp.  if  $d(P)=0$).  
Thus $\mathbb C^{I(P)}$ is a vanishing coordinate subspace if $P$ is a vanishing weight vector.
We denote the   sets of strictly positive weight vectors (i.e. $I(P)=\emptyset$), vanishing weight vectors and non-vanishing weight vectors  by  $\mathcal W_+(f),\,\mathcal W_v(f)$ and 
$\mathcal W_{nv}(f)$ respectively.  Hereafter we simply denote them as $\mathcal W_+,\,\mathcal W_v,\,\mathcal W_{nv}$, if no ambiguity is likely.
\subsection{Convexity of the equivalence  class}
Let $P$ be a weight vector in $N^+$ and let $[P]$ the set of equivalent  weight vectors. The equivalence class $[P]$ is the 
interior of a   polyhedral convex cone in $N^+$
and $\dim\,[P]=n-\dim\,\De(P)$. This follows from the obvious equality:
\[\begin{split}
\De(P)\cap\De(Q)&\ne \emptyset \implies\\
&\De((1-t)P+tQ)=\De(P)\cap\De(Q),\quad 0< t< 1.
\end{split}
\]
Put ${\LS}(P,Q):=\{(1-t)P+tQ\,|\, 0\le t\le 1\}$
and we call ${\LS}(P,Q)$ {\em the line segment with ends  $P,Q$}.
Consider the closure $\overline{[P]} $ of $[P]$ in the Euclidean topology.
Then $Q\in \overline{[P]}$ if and only if $\De(Q)\supset \De(P)$ and $\overline{[P]}$ is also a closed polyhedral convex cone.
We say $Q$ is on the boundary of $[P]$ if $[Q]\subset \overline{[P]}\setminus [P]$
 and denote as $Q\succ P$. 
 Note that $Q\succ P$ if and only if $\De(Q)\supsetneq \De(P)$. 
We visualize $\Ga^*(f)$ by cutting $N^+$ by some transversal  hyperplane $\Pi$ to the cone, say $\Pi:\,\nu_1+\dots+\nu_n=1$ and we see the  silhouette.  See Figure 2.
In the figure, the dimension of the equivalence class is one less.
Let $P$ a weight vector.  We say that $P$ is {\em a vertex of the dual Newton diagram  $\Ga^*(f)$}
if  and only if $\dim\,\De(P)=n-1$ or equivalently   $\dim\,[P]=1$.
\begin{Definition}{\rm
$f$ is called {\em $k$-convenient} if $f^I\not\equiv 0$ for any $I\subset \{1,\dots, n\}$ with $|I|\ge n-k$,  We say for simplicity $f$ is {\em convenient} if $f$ is 
$(n-1)$-convenient  (\cite{Okabook}). }
\end{Definition} 
\subsection{Face function, non-degeneracy and tameness} Let $\Xi$ be a face of $\Ga_+(f)$.
 The face function of  $\Xi$ is defined by
 $f_\Xi(\mathbf z):=\sum_{\nu\in \Xi}c_\nu\mathbf z^\nu$. For a weight vector $P$, we define 
$f_P(\mathbf z):=f_{\De(P)}(\mathbf z)$.  
If $P\in \mathcal W_+\cup\mathcal W_{v} $,  then $d(P)>0$ and $f_P$ is a weighted homogeneous polynomial of $\mathbf z_J$,  $J=I(P)^c$,  of degree $d(P)$ with respect to the weight $P$.  
Here $I(P)^c=\{1,\dots,n\}\setminus I(P)$ and 
$\mathbf z_I=(z_i\,|\, i\in I\}$. 
We recall that $f$ is {\em non-degenerate} if
 the mapping $f_P:\mathbb C^{*n}\to \mathbb C$ has no critical point  for any $P\in \mathcal W_+$.
Recall that  $\mathbb C^{*n}:=\{\mathbf z\in \mathbb C^n\,|\, z_i\ne 0,\,1\le i\le n\}$.

We say that  $f$ is  {\em locally tame} (or {\em strongly locally tame}) if for any  weight vector $P\in \mathcal W_v $,
the face function   $f_P: \mathbb C^{*I(P)^c}\to \mathbb C$ 
 has no critical point  as a function of variables $\mathbf z_{I(P)^c}$
  for any sufficiently small (resp. for any)
$\mathbf z_I\in \mathbb C^{*I(P)}$  fixed (\cite{EO14}). 
Here $\mathbb C^{*I}=\{\mathbf z\in \mathbb C^n\,|\, z_j=0, \,\text{iff}\, j\not\in I\}$. For $I=\{1,\dots, n\}$,
we write simply $\mathbb C^{*n}$ instead of $\mathbb C^{*I}$.
\begin{Definition}
{\rm
Let ${\Var}(P)=\{j\,|\, \frac{\partial f_P}{\partial z_j}\ne 0\}$. That is, $j\in \Var(P)$ if and only if $z_j$ appears in a monomial of $ f_P(\mathbf z)$. We call ${\Var}(P)$ {\em the variables} of $P$ or of  $f_P$.
Let $\widetilde I(P):=\bigcup\{I(Q)\,|,  Q\succ P, Q\in \mathcal W_{v}\}$ and we put $\widetilde {\Var}(P):={\Var}(P)\setminus \ \widetilde I(P)$. We call $ \{z_j\,|\, j\in\widetilde {\Var}(P)\}$ {\em invulnerable variables for $P$}.
Note that $\widetilde I(P)\supset \widetilde I(Q)$ if $Q\succ P$.  We introduce a stronger tameness:
$f$ is {\em strongly  inv-tame for $P$} if $\widetilde{\Var}(P)$ is not empty and   $f_P:\mathbb C^{*n}\to \mathbb C$ has no critical point as a polynomial of the invulnerable  variables $\{z_j,|\, j\in \widetilde{\Var}(P)\}$
for any $\mathbf z_{\widetilde I(P)}\in \mathbb C^{*\widetilde I(P)}$ fixed. 
We say $f$ is  {\em strongly  inv-tame} if  any weight vector $P\in \mathcal W_+\cup\mathcal W_v$ with $\dim\, \De(P)\ge 1$, $f$ is strongly  inv-tame for $P$.

For example, consider a weight vector $D$ on the open interval $\Int({RE_3})$ in $f_1(\mathbf z)$ (Figure 2). Then $R,E_3\succ D$. But $E_3\in \mathcal W_{nv}$. Thus $\widetilde I(D)=\{1\}$ and $f_{1D}=z_1^5z_2^2$ and we see $f_1$ is  strongly  inv-tame
for $D$.}
\end{Definition}
\begin{Remark}{\rm 
Assume that $f$ is $(n-2)$-convenient. Take  $P\in \mathcal W_+\cup\mathcal W_v$ with $\dim\, \De(P)\ge 1$. Assume that $Q\in \overline{[P]}$ and $Q\in \mathcal W_v$. Then $\sharp I(Q)=1$.  (If $\sharp(I(Q))=2$, $Q\in \mathcal W_{nv}$.)
If $P\in \mathcal W_v$, $I(Q)=I(P)$. Thus it is easy to check if $P$ is strongly   inv-tame or not.}
\end{Remark}

\subsection{Dimension of $[P]$} We recall the following relation of the dimension of the equivalence class $[P]$ and the dimension of $\De(P)$:
\[
\dim\,[P]=n-\dim\,\De(P). 
\]
Suppose that $I(P)\ne \emptyset$ and put $I:=I(P)$. Consider $f$ as a polynomial 
$f(\mathbf z)\in K[\mathbf z_{I^c}]$ with the coefficient ring $K:=\mathbb C[\mathbf z_{I}]$. We use the notation 
${}^Kf$ when we consider $f$  as a polynomial in $K[\mathbf z_{I^c}]$.  
Note  that $\dim\,\De_c(P)=\dim\,\De (P_{I^c},{}^Kf)$ and 
\[
\dim\,\De(P)=\dim\, \De_c(P)+\sharp I. 
\]
where $\sharp I$ is the cardinality of $I$. Here $\De_c(P)$ is defined by $\De(P)\cap \Ga(f)$.

\subsection{Normalized weight vector}
Take a weight vector $P=(p_1,\dots, p_n)\in \mathcal W_+\cup\mathcal W_v$. Then $d(P)>0$.
We consider the rational weight vector $\hat P=(\hat p_1,\dots, \hat p_n)$ which is defined by $\hat P= P/{d(P)} $. That is,  $\hat p_i=p_i/d(P),\,i=1,\dots,n$. It is clear that $P$ and $\hat P$ are equivalent and $d(\hat P)=1$.
We use this notation  throughout the paper and we call $\hat P$ {\em the normalized weight vector of $P$}.
If $d(P)=0$, $P$ does not have  any normalized  form.
Using the normalized weight vector, each monomial in $f_{\hat P}(\mathbf z)$ has weight $1$.
For given two weight vectors $P$ and $Q$ with $d(P),d(Q)>0$ and 
$\De(P,Q):=\De(P)\cap\De(Q)\ne \emptyset$, 
 consider the line segment ${\LS}(P,Q)=\{\hat P_t\,|\, 0\le t\le 1\}$ where
$\hat P_t:=(1-t)\hat P+t \hat Q$. 
 The  $i$-component $\hat p_{ti}$ of $\hat P_t$ is given as 
$\hat p_{ti}=(1-t)\hat p_i+ t\hat q_i$ and it is monotone  (either increasing or  constant or decreasing) function in $t$ for any $1\le i\le n$. In particular,
\begin{Proposition} There is  a canonical inequality:
$\hat p_{ti}\ge \min\{\hat p_i,\hat q_i\}$.
\end{Proposition}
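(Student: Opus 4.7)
The plan is to exploit the elementary fact, already highlighted in the text immediately preceding the statement, that each coordinate $\hat p_{ti} = (1-t)\hat p_i + t\hat q_i$ is a convex combination of the two endpoint coordinates $\hat p_i$ and $\hat q_i$. The inequality we need is then just the lower bound inherent to any convex combination.

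Concretely, I would argue as follows. Fix $i \in \{1,\dots,n\}$ and set $m_i := \min\{\hat p_i, \hat q_i\}$. Since $0 \le t \le 1$, both coefficients $(1-t)$ and $t$ are non-negative and sum to $1$. Therefore
\[
\hat p_{ti} = (1-t)\hat p_i + t\hat q_i \;\ge\; (1-t)\,m_i + t\,m_i \;=\; m_i,
\]
which is exactly the claimed inequality. The same argument gives the dual upper bound $\hat p_{ti} \le \max\{\hat p_i,\hat q_i\}$, confirming the parenthetical assertion that $\hat p_{ti}$ traces a monotone path between $\hat p_i$ and $\hat q_i$.

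There is no genuine obstacle here: the proposition is an immediate consequence of the affine-linear dependence of $\hat P_t$ on $t$, and the hypothesis $\De(P,Q)\ne\emptyset$ (which ensures $\hat P_t$ genuinely parametrises a line segment in the relevant equivalence class) plays no role in the pointwise coordinate estimate. The only point worth stressing in the write-up is that the normalisation $P\mapsto \hat P$ is well defined precisely because we assumed $d(P),d(Q)>0$, so that $\hat p_i$ and $\hat q_i$ make sense; once this is noted, the proof is a one-line convexity argument.
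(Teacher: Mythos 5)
Your proof is correct and is essentially the argument the paper intends: the paper deduces the inequality from the observation that $\hat p_{ti}=(1-t)\hat p_i+t\hat q_i$ is a monotone affine function of $t$ on $[0,1]$, which is the same elementary convex-combination bound you write out explicitly. Your added remarks on the need for $d(P),d(Q)>0$ and the dual upper bound are consistent with the text and harmless.
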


\setcounter{figure}{0}
\begin{figure}[htb]
\setlength{\unitlength}{1bp}
\begin{picture}(600, 300)(-100,-20) 
{\includegraphics[width=8cm,  bb=0 0 595 842]{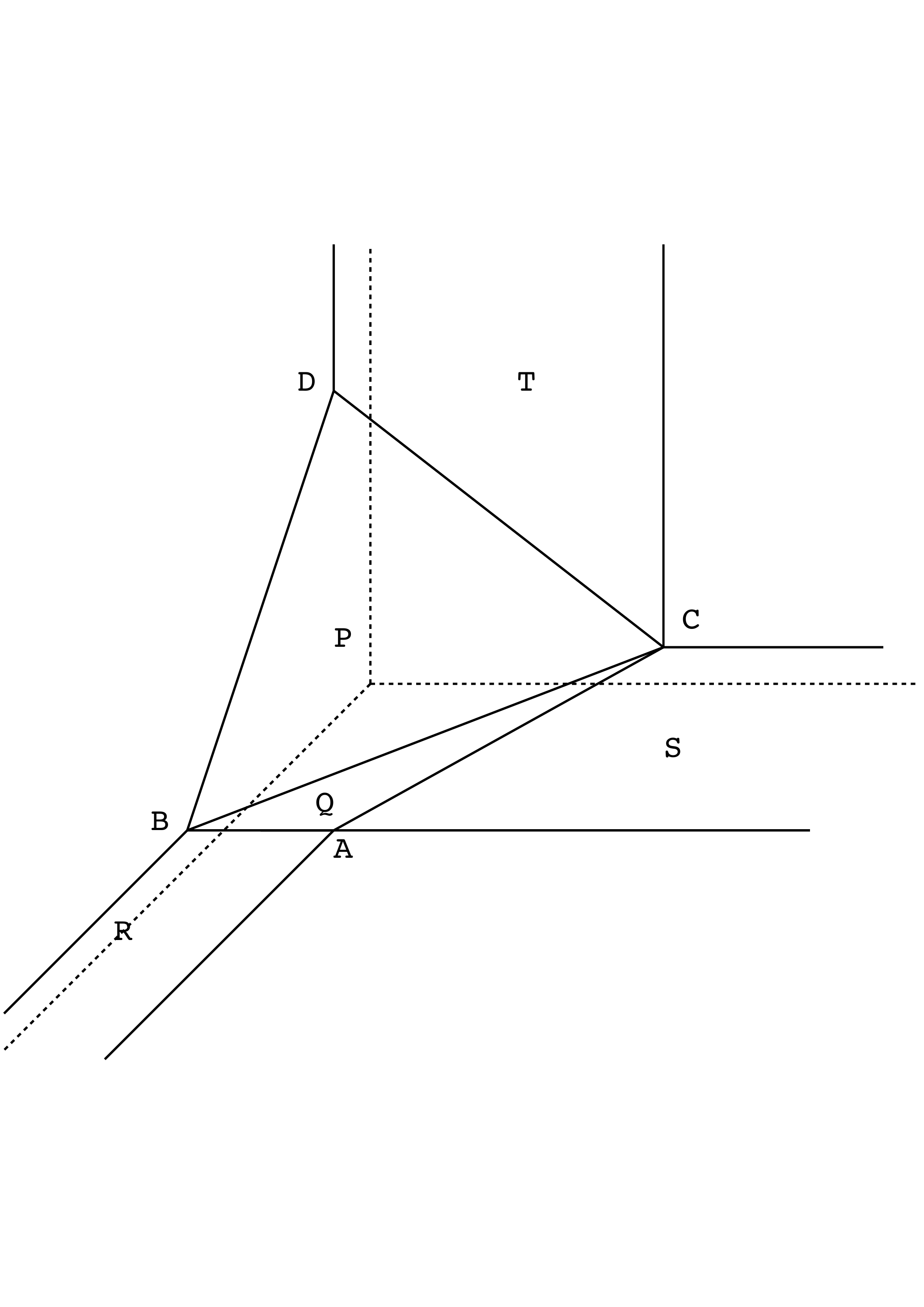}}
\put(-320,220){$f_1(\bfz)=z_1^5z_2^2+z_1^6z_3+z_2^6z_3^2+z_3^6z_1^3$}
\put(-300,200){$A=(5,2,0)\iff z_1^5z_2^2$}
\put(-300,185){$B=(6,0,1)\iff z_1^6z_3$}
\put(-300,170){$C=(0,6,2)\iff z_2^6z_3^2$}
\put(-300,155){$D=(3,0,6)\iff z_1^3z_3^6$}
\end{picture}
\vspace{-3cm}
\caption{Newton boundary of $f_1$}\label{NB1}
\end{figure}
\begin{figure}[htb]
\setlength{\unitlength}{1bp}
\begin{picture}(600, 300)(-100,-20) 
{\includegraphics[width=8cm,  bb=0 0 595 842]{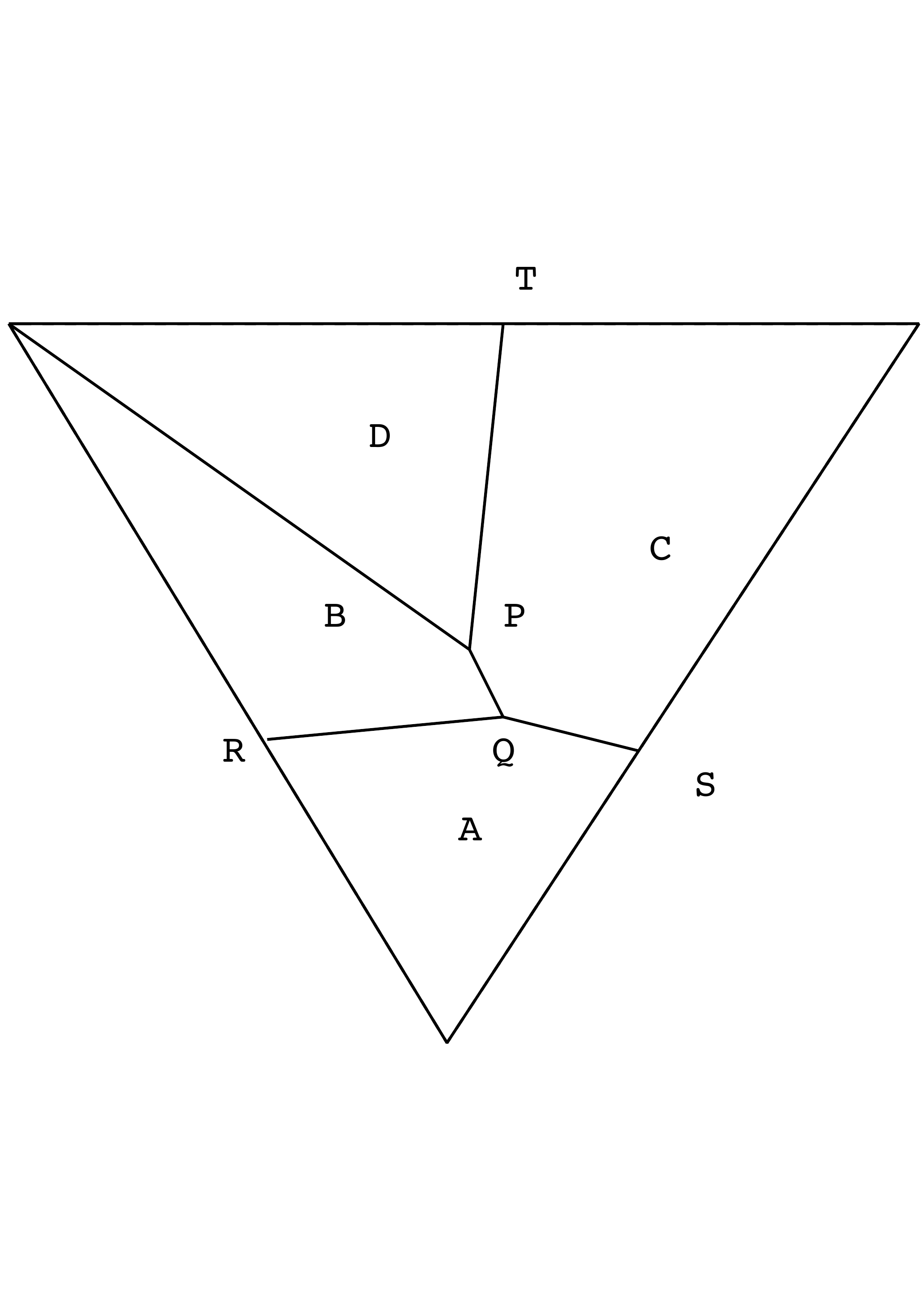}}
\put(-300,200){$E_1=(1,0,0)$}
\put(-300,185){$E_2=(0,1,0)$}
\put(-300,170){$E_3=(0,0,1)$}
\put(-106,238){$\bullet$}
\put(-115,160){$\bullet$}
\put(-108,143){$\bullet$}
\put(-73,133){$\bullet$}
\put(-165,137){$\bullet$}
\put(-227,238){$\bullet$}
\put(-0,238){$E_1$}
\put(-5,238){$\bullet$}
\put(-245,240){$E_2$}
\put(-133,60){$E_3$}
\put(-120,62){$\bullet$}
\put(-320,80){$f_1(\bfz)=z_1^5z_2^2+z_1^6z_3+z_2^6z_3^2+z_3^6z_1^3$}
\put(-300,155){$\hat P=(\frac 5{33},\frac 3{22},\frac 1{11})$}
\put(-300,140){$\hat Q=(\frac 4{27},\frac 7{54},\frac19)$}
\put(-300,125){$\hat R=(0,\frac12,1)$}
\put(-300,110){$\hat S=(\frac 15,0,\frac12)$}
\put(-300,95){$\hat T=(\frac 13,\frac16,0)$}
\end{picture}
\vspace{-3cm}
\caption{$\Ga^*(f_1)$}\label{DNB1}
\end{figure}


\begin{figure}[htb]
\setlength{\unitlength}{1bp}
\begin{picture}(600, 300)(-100,-20) 
{\includegraphics[width=8cm,  bb=0 0 595 842]{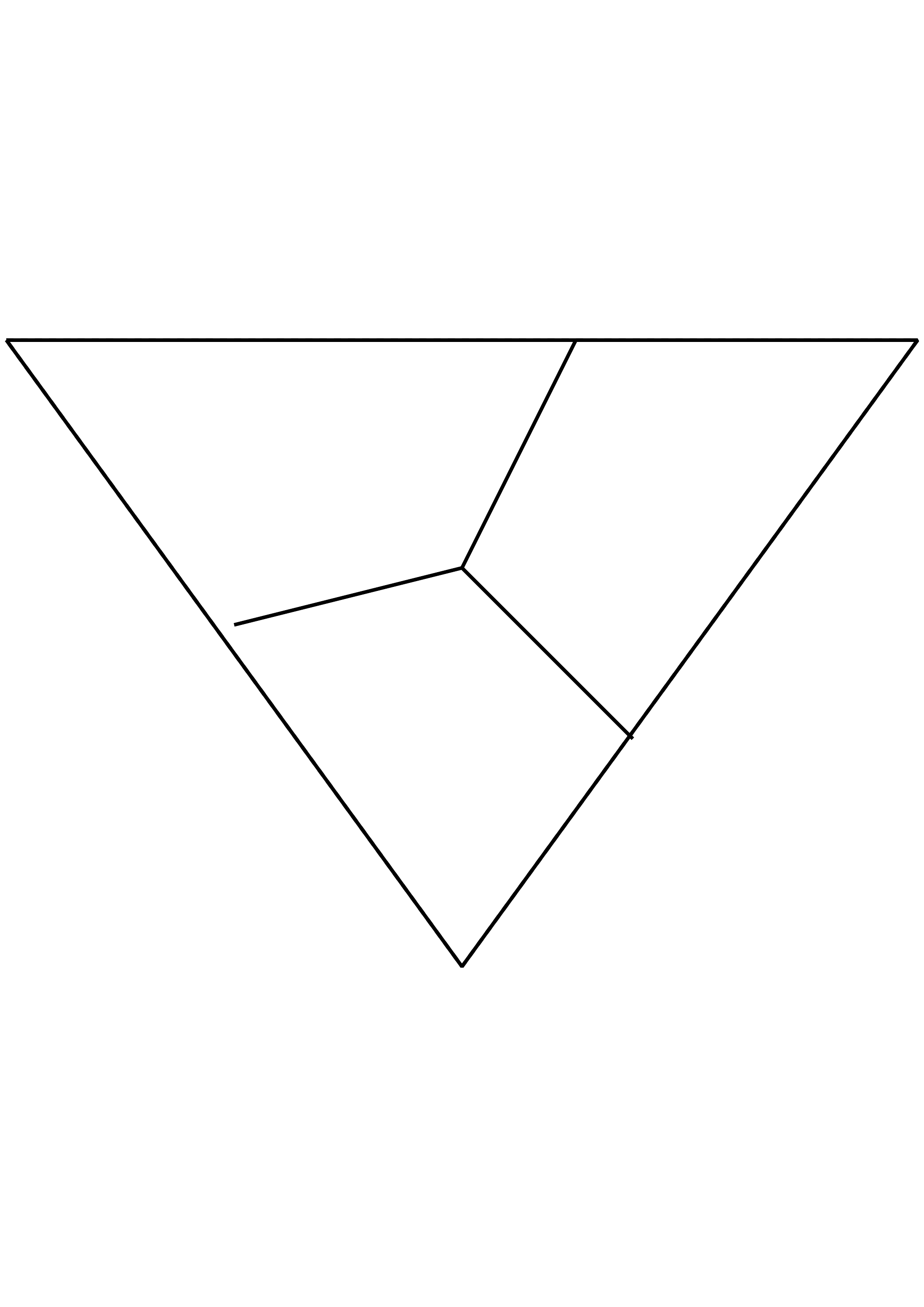}}

\put(0,230){$E_1$}
\put(-5,234){$\bullet$}
\put(-250,230){$E_2$}
\put(-230,235){$\bullet$}
\put(-100,80){$E_3$}
\put(-116,80){$\bullet$}
\put(-90,240){$T$}
\put(-88,235){$\bullet$}
\put(-104,178){$P$}
\put(-116,178){$\bullet$}
\put(-185,155){$R$}
\put(-175,163){$\bullet$}
\put(-67,141){$S$}
\put(-75,137){$\bullet$}
\put(-320,70){$\hat P=(\frac {4+bc-2c}{abc+8},\frac {4+ac-2a}{abc+8},\frac {4+ab-2b}{abc+8})$}
\put(-320,50){$\hat R=(0,\frac 1{2},\frac 1c),$}
\put(-250,50){$\hat S=(\frac1a,0,\frac 1{2}),$}
\put(-180,50){$\hat T=(\frac 1{2},\frac 1b,0)$}
\put(-320,30){$f_2=z_1^az_2^{2}+z_2^bz_3^{2}+z_3^cz_1^{2},\,a,b,c>2$}
\end{picture}
\vspace{-2.0cm}
\caption{ $\Ga^*(f_2)$}\label{DNB2}
\end{figure}

\begin{figure}[htb]
\setlength{\unitlength}{1bp}
\begin{picture}(600, 300)(-100,-20) 
{\includegraphics[width=8cm,  bb=0 0 595 842]{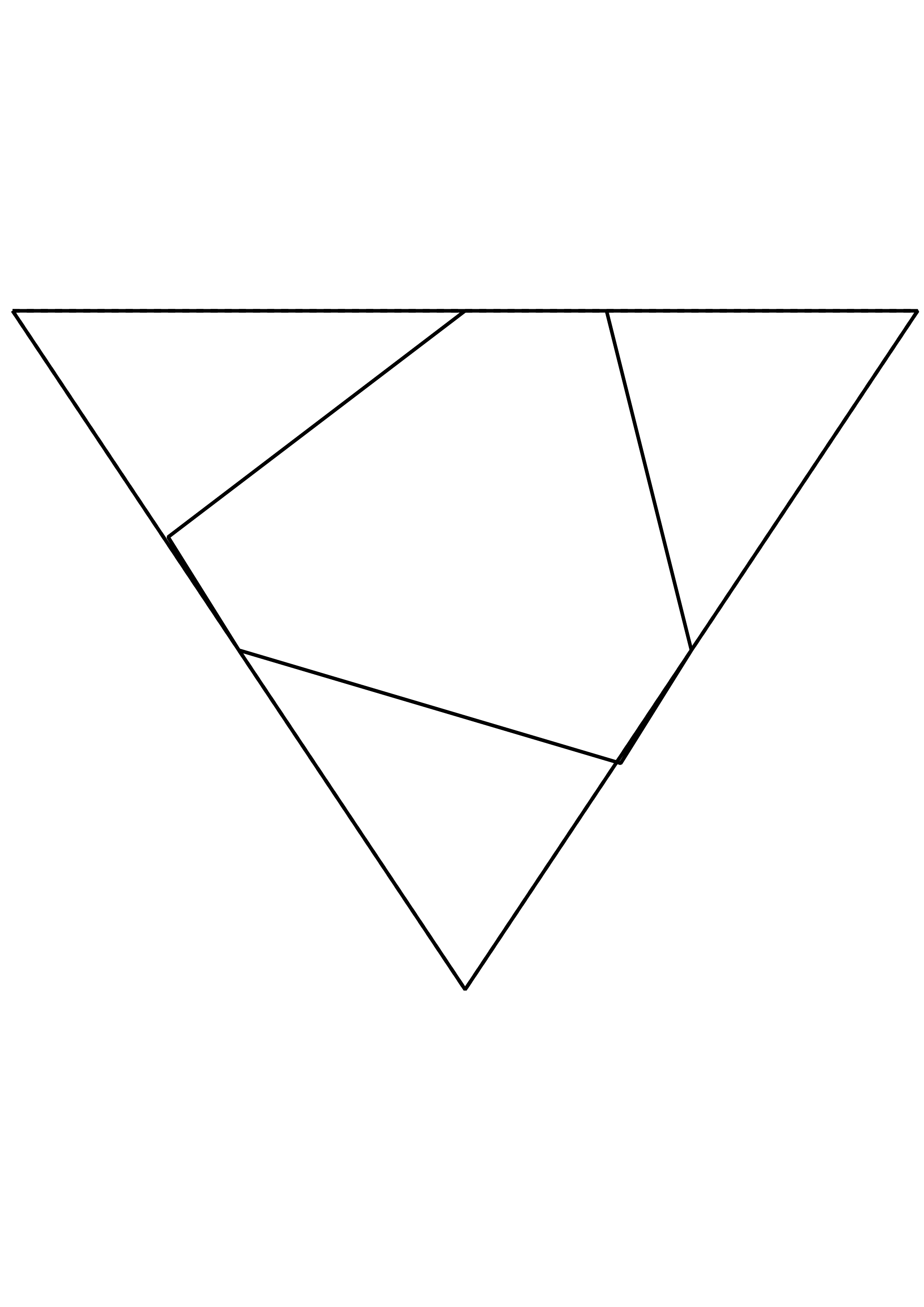}}

\put(0,240){$E_1$}
\put(-5,241){$\bullet$}
\put(-240,240){$E_2$}
\put(-226,241){$\bullet$}
\put(-100,80){$E_3$}
\put(-115,76){$\bullet$}
\put(-85,249){$R_2$}
\put(-81,242){$\bullet$}
\put(-115,249){$R_1$}
\put(-115,243){$\bullet$}
\put(-185,155){$S_1$}
\put(-170,157){$\bullet$}
\put(-200,185){$S_2$}
\put(-188,185){$\bullet$}
\put(-65,135){$T_2$}
\put(-77,132){$\bullet$}
\put(-55,150){$T_1$}
\put(-60,157){$\bullet$}
\put(-300,178){$\hat S_1=(0,\frac 12,\frac 12 )$}
\put(-300,163){$\hat S_2=(0,\frac 34,\frac 14)$}
\put(-300,148){$\hat T_1=(\frac 12,0,\frac 12)$}
\put(-300,133){$\hat T_2=(\frac 14,0,\frac 34)$}
\put(-300,118){$\hat R_1=(\frac12,\frac 12,0)$}
\put(-300,103){$\hat R_2=(\frac34,\frac 14,0)$}
\put(-300,88){$f_3=z_1^4z_2^2+z_2^4z_3^2+z_3^4z_1^2+z_1z_2z_3$}
\end{picture}
\vspace{-3.5cm}
\caption{ $\Ga^*(f_3)$}\label{DNB3}
\end{figure}

\subsection{Examples}
In the following,  polynomials in Example 1, Example 2 and Example  3 are all non-degenerate and strongly   inv-tame.
\begin{Example}\label{Example}
{\rm
Consider $f_1(\bfz)=z_1^4z_2^2+z_1^6z_3+z_2^6z_3^2+z_1^3z_3^6$. 
 Figure 1 and Figure 2 show the Newton boundary and the dual Newton diagram of $f_1$ respectively. In Figure 2, 
we see 8 equivalence  clasees which correspond to the vertices
$P,Q\in \mathcal W_+$, $R,S,T\in \mathcal W_v$ and $E_1,E_2,E_3\in \mathcal W_{nv}$, 11 equivalence  classes  corresponding to the edges and 4 equivalence  classes  which corresponds  to the interiors of four polyhedra in Figure 2. Here $E_1,E_2,E_3$ are the standard basis of $\mathbb R^3$.
}
\end{Example}

\begin{Example}
{\rm (Weighted homogeneous case) Consider the weighted homogeneous polynomial 
$f_2(\mathbf z)=z_1^az_2^{2}+z_2^bz_3^{2}+z_3^cz_1^{2}$  with $a,b,c>2$.
The dual Newton diagram is given in Figure 3.
}
\end{Example}
\begin{Example} {\rm Consider the polynomial
 $f_3(\mathbf z)=z_1^4 z_2^2+z_2^4 z_3^2+z_3^4z_1^2+z_1z_2z_3$.
This polynomial is  special  in the sense that its
 Newton boundary $\Ga(f)$ does not have any compact 2-dimensional face.
 See Figure 4.
 }
\end{Example}

\subsection{Ratio maps for curves in the coordinate subspaces}
Assume that $\mathbb C^I$ is a non-vanishing coordinate subspace.
Let $\mathcal  P_I$ be the set of  analytic curves $C:\mathbf z=\mathbf z(t),\, 0\le t\le 1$ such that  its image is 
in $\mathbb C^I\subset \mathbb C^n$, $\mathbf z(0)=\mathbf 0$ and 
$\mathbf  z(t)\in \mathbb C^{*I}\setminus V(f^I)$ for $t\ne 0$. 
For a  curve $\mathbf z(t)\in \mathcal P_I$, we consider the ratio map
\[
\theta : \mathcal P_I\to [0,1),\quad\theta(C)= \theta (\mathbf z(t))=\dfrac{{{{\ord}}}_t\|\partial f(\mathbf z(t))\|}{{{{\ord}}}_t f(\mathbf z(t))}.
\]
Note that $\|\partial f(\mathbf z(t))\|$ is measured in $\mathbb C^n$.
Consider a modified curve $\widetilde{\mathbf z}(t)$ defined as 
$\widetilde z_i(t)=z_i(t)$ for $i\in I$ and $=t^N$ for $i\not\in I$.  Note that   $\widetilde{\mathbf z}(t)\in \mathbb C^{*n}$ for $t\ne 0$.
Let  $\widetilde P=(\widetilde p_i,\dots,\widetilde p_n)$  be the weight vector of $\widetilde{\mathbf z}(t)$. 
Thus $\widetilde p_i=p_i$ for $i\in I$ and $\widetilde p_i=N$ for $i\not\in I$.
\begin{Proposition} \label{Reduced}
Taking 
$N$ sufficiently large we have that 
\[\begin{split}
&{\ord}_t\,\partial f(\widetilde{\mathbf z}(t))=
{\ord}_t\,\partial f(\mathbf z(t))\le {\ord}_t\, \partial f^I(\mathbf z(t)),\\
&{\ord}_t\, f(\mathbf z(t))={\ord}_t\,f(\widetilde{\mathbf z}(t)).
\end{split}
\]
Thus we have the equality:
\[
\frac{{\ord}_t\,\partial f(\mathbf z(t))}{{\ord}_t\,f(\mathbf z(t))}=\frac{{\ord}_t\,\partial f(\widetilde{\mathbf z}(t))}{{\ord}_t\,f(\widetilde{\mathbf z}(t))}.\]

\end{Proposition}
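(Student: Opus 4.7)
The plan is to decompose $f$ and each partial derivative $\partial f/\partial z_j$ into a part ``supported on $\mathbb C^I$'' plus a remainder consisting of monomials each involving some $z_k$ with $k\notin I$, and then exploit the fact that $\widetilde z_k(t)=t^N$ for $k\notin I$ forces the $t$-order of that remainder to be at least $N$. Choosing $N$ larger than finitely many fixed integers will then guarantee that substituting $\widetilde{\mathbf z}(t)$ produces the same $t$-order as substituting $\mathbf z(t)$.

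For the equality $\operatorname{ord}_t f(\mathbf z(t))=\operatorname{ord}_t f(\widetilde{\mathbf z}(t))$, I would write $f=f^I+g$, where $g$ collects the monomials $c_\nu\mathbf z^\nu$ having $\nu_k\ge 1$ for some $k\notin I$. Since $\mathbf z(t)\in\mathbb C^I$ and $\mathbf z(t)\notin V(f^I)$ for $t\ne 0$, the quantity $d:=\operatorname{ord}_t f^I(\mathbf z(t))$ is a finite integer and $f(\mathbf z(t))=f^I(\mathbf z(t))$. At $\widetilde{\mathbf z}(t)$ every monomial of $g$ picks up at least one factor $t^N$, so $\operatorname{ord}_t g(\widetilde{\mathbf z}(t))\ge N$; for $N>d$ we conclude $\operatorname{ord}_t f(\widetilde{\mathbf z}(t))=d$.

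For the gradient, I would apply the analogous splitting to each $\partial f/\partial z_j$. If $j\in I$, only monomials $c_\nu\mathbf z^{\nu-e_j}$ with $\nu_k=0$ for all $k\notin I$ survive at $\mathbf z(t)$, so $(\partial f/\partial z_j)(\mathbf z(t))=(\partial f^I/\partial z_j)(\mathbf z(t))$; the remaining monomials, evaluated at $\widetilde{\mathbf z}(t)$, each contribute $t$-order $\ge N$. If $j\notin I$, the only monomials that do not vanish at $\mathbf z(t)$ are those with $\nu_j=1$ and $\nu_k=0$ for $k\notin I\cup\{j\}$; every other monomial carries either a factor $\widetilde z_k=t^N$ with $k\notin I$, $k\ne j$, $\nu_k\ge 1$, or a factor $\widetilde z_j^{\nu_j-1}=t^{N(\nu_j-1)}$ with $\nu_j\ge 2$. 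Either way the contribution has $t$-order $\ge N$. Choosing $N$ larger than $\max_j\operatorname{ord}_t(\partial f/\partial z_j)(\mathbf z(t))$ (over those $j$ for which this order is finite, with any prescribed large value chosen for the identically zero components) therefore yields
\[
\operatorname{ord}_t(\partial f/\partial z_j)(\widetilde{\mathbf z}(t))=\operatorname{ord}_t(\partial f/\partial z_j)(\mathbf z(t)) \quad\text{for every } j.
\]
Taking the minimum over $j$ gives $\operatorname{ord}_t\|\partial f(\widetilde{\mathbf z}(t))\|=\operatorname{ord}_t\|\partial f(\mathbf z(t))\|$. The inequality $\operatorname{ord}_t\|\partial f(\mathbf z(t))\|\le\operatorname{ord}_t\|\partial f^I(\mathbf z(t))\|$ is then immediate: for $i\in I$ the $i$-th component of $\partial f^I$ agrees with the $i$-th component of $\partial f$ at $\mathbf z(t)$, so the right-hand order is the minimum over a subset of the components whose overall minimum defines the left-hand order. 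The displayed ratio equality follows by dividing the two established equalities.

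The only mildly delicate step is the bookkeeping in the $j\notin I$ case, where $\widetilde z_j$ appears both as a linear factor and through the power $\widetilde z_j^{\nu_j-1}$; one must enumerate both sources of ``error'' monomials and verify that they uniformly contribute $t$-order at least $N$. Once this is settled, choosing $N$ sufficiently large is routine.
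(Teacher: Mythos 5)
Your proposal is correct and follows essentially the same route as the paper: the paper observes that $f-f^I$ and each $\partial f/\partial z_j - (\partial f/\partial z_j)|_{\mathbb C^I}$ lie in the ideal $(z_j)_{j\notin I}$, so that substituting $\widetilde{\mathbf z}(t)$ changes each component only by terms of order at least $N$, which is exactly your monomial decomposition $f=f^I+g$ carried out componentwise. Your explicit bookkeeping for the $j\notin I$ components just spells out what the paper compresses into the congruence modulo that ideal.
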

\begin{proof}
First observe that the difference 
\[\begin{split}
&f(\mathbf z)-f(\mathbf z_I), 
\,\,\frac{\partial f}{\partial z_i}(\mathbf z)-\frac{\partial f}{\partial z_i}(\mathbf z_I)\equiv 0\,\, 
\mod\,(z_j)_{j\not\in I}.
\end{split}
\]
Here $(z_j)_{j\not\in I}$ is the ideal generated by $z_j,\,j\not\in I$.
Therefore taking 
$N$ sufficiently large we may assume that 
   (i)
$\De(\widetilde P,f)=\De(P,f^I)$ 
and
\[{\rm (ii)}\quad
{\ord}_t \left (
\frac{\partial f}{\partial z_j}(\mathbf z(t))-\frac{\partial f}{\partial z_j}(\widetilde{\mathbf z}(t))\right
)\ge N
\]
and (iii) ${\ord}_t\,f(\mathbf z(t))={\ord}_t\,f(\widetilde{\mathbf z}(t))$. Then the assertion follows immediately.
\end{proof}
Thus  we have 
\begin{Corollary}\label{Reduced2}
$\theta(\mathbf z(t))=\theta(\widetilde{\mathbf z}(t))$.
\end{Corollary}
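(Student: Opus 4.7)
The plan is to derive this corollary as an essentially immediate consequence of Proposition~\ref{Reduced}, which already contains the key identities. First I would unpack the definition
\[
\theta(\mathbf z(t))=\dfrac{{{\ord}}_t\|\partial f(\mathbf z(t))\|}{{{\ord}}_t f(\mathbf z(t))}
\]
and identify the numerator with ${\ord}_t\,\partial f(\mathbf z(t))$ as used in the proposition. This identification rests on the standard fact that for a holomorphic vector $(g_1(t),\dots,g_n(t))$ with $g_j(t)=a_j t^{m_j}+\cdots$, one has ${\ord}_t\,\|(g_1(t),\dots,g_n(t))\|=\min_j m_j$; this holds because the leading terms of $|g_j(t)|^2$ of minimal order are positive reals and therefore cannot cancel.

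Once this identification is in place, the corollary follows by applying the two displayed equalities in Proposition~\ref{Reduced}: the first gives equality of ${\ord}_t\,\partial f$ evaluated along $\mathbf z(t)$ and along $\widetilde{\mathbf z}(t)$, and the third (the last line of the proof of Proposition~\ref{Reduced}) gives the same for ${\ord}_t\,f$. Dividing yields $\theta(\mathbf z(t))=\theta(\widetilde{\mathbf z}(t))$.

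There is no real obstacle here, since Proposition~\ref{Reduced} already states precisely the ratio equality under a different notation. The only point to double-check is that the curve $\widetilde{\mathbf z}(t)$ lies in $\mathcal P_I$'s ambient extension in a way that makes $\theta(\widetilde{\mathbf z}(t))$ well-defined, i.e.\ that $f(\widetilde{\mathbf z}(t))\not\equiv 0$; but this is automatic since ${\ord}_t f(\widetilde{\mathbf z}(t))={\ord}_t f(\mathbf z(t))<\infty$ by the condition $\mathbf z(t)\notin V(f^I)$. Hence no additional argument beyond citing Proposition~\ref{Reduced} is needed.
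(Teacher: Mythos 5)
Your proposal is correct and is exactly the paper's (implicit) argument: the corollary is stated as an immediate consequence of Proposition~\ref{Reduced}, whose final displayed identity is precisely the equality of the two ratios defining $\theta$. Your additional remarks about identifying ${\ord}_t\|\partial f\|$ with $\min_j{\ord}_t\,\partial f/\partial z_j$ and about $f(\widetilde{\mathbf z}(t))\not\equiv 0$ are sound but not developed in the paper, which treats the corollary as needing no proof.
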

Therefore for the estimation of $\theta_0(f)$, it is enough to   consider  the case $I=\{1,\dots, n\}$, i.e. $\mathbf z(t)\in \mathbb C^{*n}$ for $t\ne 0$.
\begin{Lemma} \label{compact-vertex}
Consider a weight vector $P\in \mathcal W_+ $ and assume that 
 $f_P(\mathbf z)=c\mathbf z^\nu,\,c\ne 0$.  Consider a normalized weight $\hat P$. 
Put $\hat p_{max}:=\max\{\hat p_i\,|\, i\in \Var(P)\}$ and $|\nu|:=\sum_{i\in \Var(P)} \nu_i$.
Then 
 $\hat p_{max}\ge \frac{1}{|\nu|}$.  
 \end{Lemma}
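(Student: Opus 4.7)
The plan is to unwind the definitions of the face function and the normalized weight vector, at which point the inequality becomes essentially a one-line averaging argument.

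First I would observe that the hypothesis $f_P(\mathbf{z}) = c\mathbf{z}^\nu$ with $c \neq 0$ forces $\Delta(P)$ to consist of the single vertex $\nu$, because $f_P = f_{\Delta(P)}$ is by definition the sum of all monomial terms of $f$ whose exponents lie in $\Delta(P)$. In particular, $\mathrm{Var}(P) = \{i \mid \nu_i > 0\}$, since $\mathrm{Var}(P)$ records exactly the variables that appear in $f_P$.

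Next I would pass to the normalized weight vector $\hat{P} = P/d(P)$. Because $\nu \in \Delta(P)$ realizes the minimum $d(P)$ of $\ell_P$ on $\Gamma_+(f)$, we have
\[
\sum_{i=1}^n \hat{p}_i \nu_i = \ell_{\hat{P}}(\nu) = 1.
\]
Since $\nu_i = 0$ for $i \notin \mathrm{Var}(P)$, this sum collapses to
\[
\sum_{i \in \mathrm{Var}(P)} \hat{p}_i \nu_i = 1.
\]

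Finally I would bound each $\hat{p}_i$ in the sum by $\hat{p}_{\max}$, obtaining
\[
1 = \sum_{i \in \mathrm{Var}(P)} \hat{p}_i \nu_i \;\le\; \hat{p}_{\max} \sum_{i \in \mathrm{Var}(P)} \nu_i = \hat{p}_{\max} \, |\nu|,
\]
which is exactly $\hat{p}_{\max} \ge 1/|\nu|$. There is no real obstacle here; the only subtlety is to make sure one is using the fact that $P \in \mathcal{W}_+$ (so $d(P) > 0$ and $\hat{P}$ is well-defined), together with the definition of $\mathrm{Var}(P)$ matching the support of $\nu$, to justify restricting the sum to indices in $\mathrm{Var}(P)$.
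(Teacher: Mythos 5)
Your proof is correct and is essentially identical to the paper's: both rest on the single identity $1=\sum_{i\in \Var(P)}\hat p_i\nu_i\le \hat p_{max}|\nu|$, with your version merely spelling out why the sum restricts to $\Var(P)$ and why it equals $1$.
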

\begin{proof}
The assertion is immediate from  the equality 
\[1=\sum_{i\in \Var(P)} \hat p_i\nu_i\le \hat p_{max}|\nu|.
\]
\end{proof}
\begin{Corollary}Consider  a curve $C$ parametrized  as in Lemma 10. 
Then 
\[
\theta(C)=1-\hat p_{max}\le 1-\frac 1{|\nu|}.
\]
\end{Corollary}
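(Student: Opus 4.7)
The plan is to convert the statement into a direct order computation along the curve $C$ and then quote Lemma~\ref{compact-vertex}. Since we are in the setting of Lemma~\ref{compact-vertex}, the curve $C$ has a weight vector $P\in\mathcal W_+$ with $f_P(\mathbf z)=c\,\mathbf z^\nu$, and by Corollary~\ref{Reduced2} we may assume $\mathbf z(t)\in\mathbb C^{*n}$ for $t\neq 0$, with Puiseux expansion
\[
z_i(t)=a_i\,t^{p_i}+(\text{higher order in }t),\qquad a_i\in\mathbb C^{*},\ i=1,\dots,n.
\]

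First I would compute the order of $f(\mathbf z(t))$. Writing $f=f_P+g$ where every monomial of $g$ has $P$-weight strictly greater than $d(P)$, substitution gives
\[
f(\mathbf z(t))=f_P(\mathbf a)\,t^{d(P)}+O(t^{d(P)+1})=c\,\mathbf a^\nu\,t^{d(P)}+O(t^{d(P)+1}),
\]
and since $\mathbf a\in\mathbb C^{*n}$ and $c\neq 0$, we get $\mathrm{ord}_t\,f(\mathbf z(t))=d(P)$.

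Next I would compute, for each $i$, the order of $\partial f/\partial z_i$ along $\mathbf z(t)$. For every $i\in\mathrm{Var}(P)$ (i.e.\ $\nu_i\geq 1$), the leading contribution comes from differentiating $f_P$, giving
\[
\frac{\partial f}{\partial z_i}(\mathbf z(t))=c\nu_i\,a_i^{\nu_i-1}\!\!\prod_{j\neq i}a_j^{\nu_j}\,t^{d(P)-p_i}+(\text{h.o.t.}),
\]
whose leading coefficient is nonzero on $\mathbb C^{*n}$. Hence $\mathrm{ord}_t\bigl(\partial f/\partial z_i\bigr)(\mathbf z(t))=d(P)-p_i$. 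Taking the minimum over all coordinates and using that the norm's order is bounded above by the order of any individual component,
\[
\mathrm{ord}_t\|\partial f(\mathbf z(t))\|\leq \min_{i\in\mathrm{Var}(P)}\bigl(d(P)-p_i\bigr)=d(P)-p_{\max},
\]
where $p_{\max}=\max\{p_i:i\in\mathrm{Var}(P)\}$; under the hypotheses of the corollary this bound is actually an equality, since the $\nu_i$-contributions dominate the higher $P$-weight remainder.

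Dividing the two orders yields
\[
\theta(C)=\frac{\mathrm{ord}_t\|\partial f(\mathbf z(t))\|}{\mathrm{ord}_t\,f(\mathbf z(t))}=1-\frac{p_{\max}}{d(P)}=1-\hat p_{\max},
\]
after normalizing by $d(P)$. Finally, Lemma~\ref{compact-vertex} gives $\hat p_{\max}\geq 1/|\nu|$, whence $\theta(C)\leq 1-1/|\nu|$, completing the chain claimed in the corollary. The only delicate point is justifying that no contribution from the higher $P$-weight tail of $f$ beats the $d(P)-p_{\max}$ bound; this is the main obstacle I would have to address carefully, but it follows from the monomial form of $f_P$ together with $\mathbf a\in\mathbb C^{*n}$.
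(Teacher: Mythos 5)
Your proof is correct and is essentially the paper's own argument: the paper's entire proof is the one-line observation that $\frac{\partial f_P}{\partial z_j}(\mathbf a)\ne 0$ for every $j\in{\Var}(P)$ and $\mathbf a\in\mathbb C^{*n}$ (since $f_P$ is a monomial), which is exactly the order computation you spell out before invoking Lemma 10 for $\hat p_{max}\ge 1/|\nu|$. The one point that neither you nor the paper fully settles is the asserted \emph{equality} $\theta(C)=1-\hat p_{max}$ --- a component $\frac{\partial f}{\partial z_j}(\mathbf z(t))$ with $j\notin{\Var}(P)$ could a priori have order strictly below $d(P)-p_{max}$, which would only make $\theta(C)$ smaller --- but the inequality $\theta(C)\le 1-\hat p_{max}\le 1-1/|\nu|$, which is all that is used in the sequel, is correctly established by your argument.
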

The assertion follows from the observation:  $\frac{\partial f_P}{\partial z_j}(\mathbf a)\ne 0$ for  any $j\in \Var(P)$
and  $\mathbf a\in \mathbb C^{*n}$.

\subsection{Ratio maps for  weight vectors} We assume that $f$ is strongly  inv-tame.
Let  $\mathcal W_+$ be the set of strictly positive weight vectors and let $\mathcal W_v$ be the subset of positive weight vectors such that   $I(P)\not=\emptyset$ and $d(P)>0$. 
Let $C=\{z=\mathbf z(t)\}\in \mathcal P$ and consider the Taylor expansion
\[
z_i(t)=a_it^{p_i}+\text{(higher terms)},\,\,a_i\ne 0,\,\,1\le i\le n.
\]
We consider the weight map 
$wt:\mathcal P\to \mathcal W$ by
${\wt}(\mathbf z(t))=P$ where $P=(p_1,\dots,p_n)$.
We want to estimate the ration
$\theta(\mathbf z(t))$ using  the weight vector ${\wt}(\mathbf z(t))$.
\begin{Definition}\rm{
 We  define the ratio maps for weight vector $P\in \mathcal W_+\cup\mathcal W_v$ as follows.
 First we put 
\[\begin{split}
&\hat p_{min}:=\min\{\hat p_j\,|\,  j\in \Var(P)\},\\
&\hat p_{min}':=\min\{\hat p_j\,|\,  j\in \widetilde{\Var}(P)\}.
\end{split}
\]
We define
\[\begin{split}
& \theta_i(P)=1-\hat p_i,\quad i\in \widetilde{\Var}(P),\,\dim\,\De{(P)}\ge 1,\\
&\theta(P)=1-\hat p_{min},\\
&\theta(P)'= 
\begin{cases}1-{\hat p_{min}'}, \quad &\dim\,\De{(P)}\ge 1,\\
1-\frac 1 {|\nu|},\,\quad &\dim\,\De(P)=0,\,f_P(\mathbf z)=c_\nu \mathbf z^\nu\end{cases}.
\end{split}
\]
Here  $P=(p_1,\dots, p_n)$ and $\hat P=(\hat p_1,\dots, \hat p_n)$ is the normalized weight vector of $P$.
}
\end{Definition}
As a special case, we have
\begin{Proposition}
Assume that $\dim\, \De(P)=n-1$. Then
  $I(P)=\widetilde {I}(P)$ and  $\hat p_{min}'=\min\,\{\hat p_j\,|\, j\in \Var(P)\setminus I(P)\}$.
  In particular, $\hat p_{min}'=\hat p_{min}$ if $P\in \mathcal W_+$.
\end{Proposition}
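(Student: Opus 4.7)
The plan is to combine the dimension formula $\dim[P]=n-\dim\Delta(P)$ from \S 2.5 with a maximality argument on faces of $\Gamma_+(f)$. Under the hypothesis $\dim\Delta(P)=n-1$, this formula gives $\dim[P]=1$, so the equivalence class $[P]$ is a ray in $N^+$ and $\overline{[P]}\setminus[P]$ collapses to the origin. Any proper successor $Q\succ P$ requires $\Delta(Q)\supsetneq\Delta(P)$; but $\Delta(P)$ already has the maximal possible dimension $n-1$ for a compact face of $\Gamma_+(f)$, so the only over-face is $\Gamma_+(f)$ itself, which corresponds to $d(Q)=0$ and hence forces $Q\in\mathcal{W}_{nv}$.

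Consequently no strict successor $Q\succ P$ lies in $\mathcal{W}_v$, so the union $\bigcup\{I(Q)\mid Q\succ P,\,Q\in\mathcal{W}_v\}$ contributes nothing beyond the intrinsic index set carried by $P$. Reading $\widetilde I(P)$ inclusively at $P$ (so that $P\in\mathcal{W}_v$ contributes $I(P)$), we obtain $\widetilde I(P)=I(P)$ in both cases $P\in\mathcal{W}_+$ (empty) and $P\in\mathcal{W}_v$. Plugging this into $\widetilde{\Var}(P)=\Var(P)\setminus\widetilde I(P)$ and into $\hat p_{min}'=\min\{\hat p_j\mid j\in\widetilde{\Var}(P)\}$ immediately yields
\[
\hat p_{min}'=\min\{\hat p_j\mid j\in \Var(P)\setminus I(P)\}.
\]
For the last assertion, when $P\in\mathcal{W}_+$ we have $I(P)=\emptyset$, so $\Var(P)\setminus I(P)=\Var(P)$ and the right-hand side coincides with $\hat p_{min}$.

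The only real subtlety I anticipate is the bookkeeping issue of whether $P$ itself is counted in the union defining $\widetilde I(P)$ when $P\in\mathcal{W}_v$; the inclusive reading is what is needed to make the first identity literally true and is consistent with the closure convention used elsewhere in \S 2.4. Beyond this point, the proof is an immediate consequence of the dimension count $\dim[P]=1$.
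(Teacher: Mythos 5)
Your proof is correct, and it is precisely the dimension count that the paper implicitly relies on: the Proposition is stated there with no proof at all. The bookkeeping point you flag is real and your resolution is the right one — with the literal definition $\widetilde I(P)=\bigcup\{I(Q)\mid Q\succ P,\ Q\in\mathcal W_v\}$ and $\succ$ strict, a vertex $P\in\mathcal W_v$ of $\Ga^*(f)$ would give $\widetilde I(P)=\emptyset\ne I(P)$, since its only strict successor is $Q=0\in\mathcal W_{nv}$; the inclusive reading (counting $P$ itself when $P\in\mathcal W_v$) is what the statement requires and is also what the paper actually uses elsewhere, e.g.\ the computation $\theta(R)'=\tfrac12$ in Example 23 needs $\widetilde I(R)=\{1\}=I(R)$. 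One cosmetic slip: for $P\in\mathcal W_v$ the face $\De(P)$ is not compact, so ``maximal possible dimension for a compact face'' is not quite the right justification; the correct (and equally immediate) statement is that the only face of $\Ga_+(f)$ properly containing an $(n-1)$-dimensional face is $\Ga_+(f)$ itself, which forces $Q=0$ and hence $d(Q)=0$, so your conclusion stands unchanged.
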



\subsection{Admissible line segments} We assume that $f$ is strongly inv-tame and non-degenerate.
Consider a weight vector $R\in \mathcal W_+\cup\mathcal W_v$ with $\dim\,\De (R) \ge 1$.
  Consider a line segment ${\LS}(P,Q)$ passing through $R$ with two weight vectors 
$P,Q$ on the boundary of $[R]$.
Recall that ${\LS}(P,Q)=\{P_s=(1-s)  P+s Q\,|\,0\le s\le 1\}$. By the assumption,   $ R=P_{s_0}, \,0<\exists s_0<1$.
We devide the situation into three cases  depending the end points $P,Q$.
\subsubsection{Strictly positive line segment}
Let $R$ be as above.
We say that the boundary of  $[R]$ is {\em strictly positive } if 
 the closure of the equivalence class $\overline{[R]}$  contains only strictly positive weight vectors. 
Thus  $P,Q\in \mathcal W_+$.  We use the line segment expression using the normalized vectors
$\hat P_s=(1-s)\hat P+s \hat Q$.  Then $R=P_{s_0'}$ for some $s_0'$. As the normalized weight  vector $\hat P_s$ is given as $\hat p_{sj}=(1-s)\hat p_j+s\hat q_j$ is a monotone linear function in $s$,
 it is easy to see that 
\begin{eqnarray}
&\theta_j(\hat R)&\le \max\{\theta_j(\hat P),\theta_j(\hat Q)\},\quad j\in {\Var}(\hat R),\\
&\theta(\hat R)'&\le \max\{\theta(\hat P),\theta(\hat Q)\}. 
\end{eqnarray}
Note that  in this case,  $\widetilde I(R)=\emptyset$ and  $\theta(\hat R)'=\theta(\hat R)$, $\theta(\hat P)'=\theta(\hat P )$ and $\theta(\hat Q)'=\theta(\hat Q)$,
as $\widetilde I(R)=\emptyset$.
For example, take $R$ on the line segment ${\LS}(P,Q)$ in Example 1 ( Figure 2).
\begin{Remark}
Note that ${\Cone}({\LS}(P,Q))={\Cone}({\LS}(\hat P,\hat Q))$,  though $(1-t)\hat P+t\hat Q$ is not necessarily the normalized vector of $(1-t)P+tQ$. Here for a subset $K\subset N^+$, we put ${\Cone}( K):=\{r\,P\,|\, P\in K,\,r>0\}$.
\end{Remark}

\subsubsection{Vanishing line segment}\label{vanishing line1}
 We  say that
${\LS}(P,Q)$ is {\em a vanishing line segment} if  $P, Q$ are in $\mathcal W_+\cup\mathcal W_v$ and  at least
one of $P$ or $Q$ is in $\mathcal W_v$.
\begin{Lemma} \label{VL1} Assume that ${\LS}(P,Q)$ is a vanishing line segment.
We assume that  $P\in \mathcal W_v$,    $Q\in \mathcal W_+\cup\mathcal W_v$.
 Consider the family of the  normalized weight vectors $\hat P_s$ for  the line segment ${\LS}(P,Q)$ which is defined as 
$\hat P_s=(1-s)\hat P+s\hat Q$  for 
$0\le  s\le1 $ and $\hat R=\hat P_{s_0'}$ ($0<\exists s_0'<1$). Then
 \[\begin{split}
& \theta_j(\hat R)\le \max\{\theta_j(\hat P),\theta_j(\hat Q)\},  \quad j\in\widetilde{\Var}(R),\\
&\theta(\hat R)'\le \max\{\theta(\hat P)',\theta(\hat Q)'\}.
\end{split}
\]
\end{Lemma}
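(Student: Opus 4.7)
The plan is to exploit the affine dependence of the line-segment parametrisation $\hat P_s = (1-s)\hat P + s\hat Q$ on $s$, together with two structural observations about invulnerable variables and about face dimensions at the endpoints.

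The first step is to record the inclusion
\[
\widetilde{\Var}(R) \subset \widetilde{\Var}(P) \cap \widetilde{\Var}(Q).
\]
Since $P,Q$ lie on the boundary of $[R]$ we have $P\succ R$ and $Q\succ R$, hence $\De(R)\subsetneq \De(P),\De(Q)$. This forces $\Var(R)\subset \Var(P)\cap \Var(Q)$, because the monomials of $f_R$ are a subset of those of $f_P$ and of $f_Q$. Combined with the relation $\widetilde I(P),\widetilde I(Q)\subset \widetilde I(R)$ noted in the definition of $\widetilde I$, this yields the inclusion. I would also note that $\dim\De(R)\ge 1$ together with $\De(R)\subsetneq \De(P),\De(Q)$ forces $\dim\De(P),\dim\De(Q)\ge 2$; so both $\theta(\hat P)'$ and $\theta(\hat Q)'$ fall under the clause $1-\hat p'_{\min}$ and not under the monomial clause.

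Given this setup, the first inequality is an immediate consequence of affine interpolation. For $j\in \widetilde{\Var}(R)\subset \widetilde{\Var}(P)\cap \widetilde{\Var}(Q)$ the $j$-th coordinate of $\hat P_s$ is $\hat p_{sj}=(1-s)\hat p_j + s\hat q_j$, affine in $s\in[0,1]$. At $s=s_0'$ this gives
\[
\theta_j(\hat R) = 1-\hat p_{s_0'\,j} = (1-s_0')\,\theta_j(\hat P) + s_0'\,\theta_j(\hat Q) \le \max\{\theta_j(\hat P),\,\theta_j(\hat Q)\}.
\]
For the second inequality I would choose $j^*\in \widetilde{\Var}(R)$ realising $\hat p'_{\min}$ at $\hat R$, so that $\theta(\hat R)' = 1-\hat p_{s_0'\,j^*}$. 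Since $j^*\in \widetilde{\Var}(P)\cap \widetilde{\Var}(Q)$, the defining minimality of $\hat p'_{\min,P}$ and $\hat p'_{\min,Q}$ gives $\hat p_{j^*}\ge \hat p'_{\min,P}$ and $\hat q_{j^*}\ge \hat p'_{\min,Q}$, hence $\theta_{j^*}(\hat P)\le \theta(\hat P)'$ and $\theta_{j^*}(\hat Q)\le \theta(\hat Q)'$. Applying the same affine-interpolation identity to this fixed index $j^*$ then yields
\[
\theta(\hat R)' = (1-s_0')\,\theta_{j^*}(\hat P) + s_0'\,\theta_{j^*}(\hat Q) \le \max\{\theta(\hat P)',\,\theta(\hat Q)'\}.
\]

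The main obstacle is purely the bookkeeping — verifying the inclusion $\widetilde{\Var}(R)\subset \widetilde{\Var}(P)\cap \widetilde{\Var}(Q)$ and confirming that neither endpoint falls into the ``monomial'' ($\dim\De=0$) branch of the definition of $\theta(\cdot)'$. Once this is settled, both inequalities reduce to the trivial fact that an affine function on $[0,1]$ is bounded by the larger of its two endpoint values, so no additional analytic input is required beyond Proposition~9 on convexity of the line segment.
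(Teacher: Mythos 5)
Your proof is correct and takes essentially the same route as the paper's: both reduce the inequalities to the affine dependence of $\hat p_{sj}=(1-s)\hat p_j+s\hat q_j$ on $s$ (the monotonicity of Proposition 9), so that each endpoint bound dominates the interior value. You merely make explicit two points the paper's one-line proof leaves implicit, namely the inclusion $\widetilde{\Var}(R)\subset\widetilde{\Var}(P)\cap\widetilde{\Var}(Q)$ (via $\Var(R)\subset\Var(P)\cap\Var(Q)$ and $\widetilde I(P),\widetilde I(Q)\subset\widetilde I(R)$) and the fact that $\dim\,\De(P),\dim\,\De(Q)\ge 2$ so neither endpoint falls into the monomial clause of $\theta(\cdot)'$.
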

\begin{proof}
 By the strong   inv-tameness, 
$\widetilde{\Var}(R)\ne \emptyset$ and   there exists a $j\in \widetilde{\Var}(R)$ so that 
$\frac{\partial f_{\hat P_{s_0'}}}{\partial z_j}(\mathbf a)\ne 0$. This implies $\theta(\hat R)'\le 1-\hat p_{s_0', j}$.
The assertion follows from the monotonicity  of $\hat p_{tj}$.
\end{proof}
\subsubsection{Non-vanishing line segment}\label{non-vanishing line} A line segment
${\LS}(P,Q)$ is called {\em non-vanishing line segment } if one of $P,Q$ is a non-vanishing weight vector.
Assume that  $P\in \mathcal W_{nv}$ and $Q\in \mathcal W_+\cup \mathcal W_v$ so that $\De(Q)\cap\De(P)\supset \De(R)$. 
Recall that 
${\LS}(P,Q)$ is defined  by $\{P_s\,|\, 0\le s\le 1\}$ where $P_s:=(1-s)P+sQ$ and $R=(1-s_0)P+s_0 Q$ as before.
The normalized weight  vectors of this  family  is written as $ \hat P_\tau:=\tau P+\hat Q$ with $\tau=\frac{1-s}s$ for $s\ne 0$.
In this parameter $\tau$, $0\le \tau<\infty$ and $\hat R=\hat P_{\tau_0},\,\exists \tau_0>0$.
Note that $\hat p_{\tau,j}$ is monotone increasing (or constant) in $\tau$ for any $j$. 
That is $\hat p_{\tau,j}\ge \hat p_{0,j}=\hat q_j$.
\begin{Lemma}\label{NVL}
We have the inequality:
\[\begin{split}
&\theta_{j}(\hat R)\le\theta_{j}(\hat Q),\, \quad j\in\widetilde {\Var}(R),\\
&\theta(\hat R)'\le \theta(\hat Q)' .\end{split}
\]
\end{Lemma}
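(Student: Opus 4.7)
The plan is to combine the monotonicity of $\hat p_{\tau,j}$ in $\tau$ (already observed immediately before the statement of the lemma) with two set-theoretic inclusions relating the data of $R$ to that of $Q$.

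\textbf{Step 1: Compare $\widetilde{\Var}(R)$ with $\widetilde{\Var}(Q)$.} Since $0 < s_0 < 1$ and $\De(P)\cap\De(Q)\supset \De(R)$, the key identity quoted in \S 2.3 gives $\De(R)=\De(P)\cap\De(Q)\subsetneq \De(Q)$, hence $Q\succ R$. Two consequences follow. First, because a monomial of $f_R$ is automatically a monomial of $f_Q$, we have $\Var(R)\subset \Var(Q)$. Second, if $Q'\succ Q$ then $\De(Q')\supsetneq \De(Q)\supsetneq \De(R)$, so $Q'\succ R$; taking the union over $Q'\in\mathcal W_v$ shows $\widetilde I(Q)\subset \widetilde I(R)$. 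Combining these,
\[
\widetilde{\Var}(R)=\Var(R)\setminus \widetilde I(R)\;\subset\;\Var(Q)\setminus \widetilde I(Q)=\widetilde{\Var}(Q).
\]

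\textbf{Step 2: First inequality.} For $j\in\widetilde{\Var}(R)\subset\widetilde{\Var}(Q)$, both $\theta_j(\hat R)$ and $\theta_j(\hat Q)$ are defined. By the monotonicity $\hat p_{\tau,j}\ge \hat p_{0,j}=\hat q_j$ (with equality iff $j\in I(P)$), evaluation at $\tau=\tau_0$ gives
\[
\theta_j(\hat R)=1-\hat p_{\tau_0,j}\;\le\;1-\hat q_j=\theta_j(\hat Q),
\]
which is the claimed bound.

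\textbf{Step 3: Second inequality.} Strong inv-tameness at $R$ guarantees $\widetilde{\Var}(R)\ne\emptyset$, so we may choose $j_0\in\widetilde{\Var}(R)$ attaining the minimum
\[
\hat p_{\tau_0,j_0}=\min\bigl\{\hat p_{\tau_0,j}\,\big|\,j\in\widetilde{\Var}(R)\bigr\}=\hat p'_{min}(\hat R).
\]
By Step 1, $j_0\in\widetilde{\Var}(Q)$, hence $\hat p'_{min}(\hat Q)\le \hat q_{j_0}$. Applying monotonicity once more, $\hat q_{j_0}\le \hat p_{\tau_0,j_0}$. Chaining these gives $\hat p'_{min}(\hat Q)\le \hat p'_{min}(\hat R)$, i.e.\ $\theta(\hat R)'\le \theta(\hat Q)'$. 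Note that because $\dim\De(Q)\ge \dim\De(R)+1\ge 2$, we are always in the first branch of the definition of $\theta(\hat Q)'$, so no separate treatment of the isolated-vertex case is needed here.

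The only non-routine step is Step 1: verifying that the $\widetilde{\Var}$-sets behave correctly under the boundary relation $Q\succ R$, so that the obvious monotonicity argument can legitimately be applied coordinate-by-coordinate. Once that inclusion is in hand, the remainder is a direct consequence of the linear formula $\hat P_\tau=\tau P+\hat Q$ and the fact that $P\in\mathcal W_{nv}$ contributes only non-negative increments.
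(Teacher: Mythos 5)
Your proof is correct and takes essentially the route the paper intends: Lemma 16 is stated without proof, resting on the monotonicity $\hat p_{\tau,j}\ge \hat p_{0,j}=\hat q_j$ recorded immediately before it, and your Steps 2--3 are precisely that argument applied coordinatewise and to the minimum. Your Step 1, deducing $Q\succ R$ and hence $\widetilde{\Var}(R)\subset \widetilde{\Var}(Q)$, merely makes explicit a detail the paper leaves implicit (it records only $\widetilde I(R)\supset \widetilde I(Q)$ for $Q\succ R$ in Definition 4), so this is the same proof, just fully written out.
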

\begin{Remark}\label{non-existence}{\rm
We  do not need  to consider the case where $P,Q$ are both non-vanishing, as $R$ is assumed to be 
in $\mathcal W_+\cup\mathcal W_v$. In the inductive argument on $\dim\, [R]$,  if the line segment is as in Lemma 16,
we  continue to work only for $Q$.
}
\end{Remark}

\section{Main result on non-degenerate functions}
\subsection{Convenient case}\label{equality}\label{Convenient-hypersurface}
Assume that $f(\mathbf z)=\sum_{\nu} {c_\nu}\mathbf z^\nu$ is a convenient non-degenerate analytic function 
in the sense of Kouchnirenko \cite{Ko}
and let $b_j$ be the point $\Ga(f)\cap\{$j$-\text{th coordinate axis}\}$.  Then $V(f)$ has an isolated singularity at the origin (Theorem (3.4),\cite{Okabook}, Corollary 20, \cite{OkaMix}).
Consider an analytic curve $\mathbf z(t),\,0\le t\le 1$ as in the previous section. Namely $\mathbf z(0)=\mathbf 0$ and 
$\mathbf z(t)\in \mathbb C^n\setminus f\inv(0)$ for $t\ne 0$.
We first assume that $\mathbf z(t)\in \mathbb C^{*n}$ for $t\ne 0$. 
Assume that $\mathbf z(t)$ has the following Taylor expansion:
\begin{eqnarray}\label{Taylor1}
&z_j(t)&=a_jt^{p_j}+\text{(higher terms)}, \quad a_j\ne 0,\, j=1,\dots,n,\\
&\frac{\partial f}{\partial z_j}(\mathbf z(t))&=\frac{\partial f_P}{\partial z_j}(\mathbf a) t^{d(P)-p_j}+\text{(higher terms)},\notag\\
&f(\mathbf z(t))&=b t^{d'}+\text{(higher terms)}, \, b\ne 0,\,d'\ge d(P).\notag
\end{eqnarray}
where 
 $P=(p_1,\dots, p_n)$, $\mathbf a=(a_1,\dots,a_n)$.  We use the same notation as in \cite{Okabook}. 
 Put  $p_{ min}=\min\{p_j\,|\, j\in \Var(P)\}$.
 Choose  index $ 1\le \al\le n$ so that $p_\al= p_{min}$.  Note that  $ \al$ may not unique but we fix it.
If $f(\mathbf z)$ is non-degenerate, then there exists $j_0$ so that $\frac{\partial f_P}{\partial z_{j_0}}(\mathbf a)\ne 0$. Thus
\begin{eqnarray}
& {{{{\ord}_t}}}\, \partial f(\mathbf z(t))\le d(P)-p_{j_0}\le d(P)-p_{min} ,\\
& \dfrac{{{{{\ord}_t}}}\,\partial f(\mathbf z(t))}{{{{\ord}_t}}\,f(\mathbf z(t))}\le \dfrac{d(P)-p_{min}}{d'}\le \dfrac{d(P)-p_{min}}{d(P)}.\label{eq7}
\end{eqnarray}
Use the normalized weight $\hat P=(\hat p_1,\dots, \hat p_n)$ where $\hat p_j:=p_j/d(P)$.
The right side of (7) is equal to $1- 1/{b_\al'}$ where $b_\al'$ is the $\al$-coordinate  so that $\hat p_\al b_\al'=1$.
This may not be an integer but we know that 
 $b_\al'\le b_\al$.
Thus we obtain
\[ \frac{{{{{\ord}_t}}}\,\partial f(\mathbf z(t))}{{{{\ord}_t}}\,f(\mathbf z(t))}\le 1-\frac 1{b_\al},\quad\forall \al,\,p_\al=p_{min}.
\]
Put $B:=\max\{b_j\,|\, j=1,\dots, n\}$ and let $I_B=\{j\,|\, b_j=B\}$. 
Thus the above estimation gives also 
$\theta(\mathbf z(t))= \frac{{{{{\ord}_t}}}\,\partial f(\mathbf z(t))}{{{{\ord}_t}}\,f(\mathbf z(t))}\le 1- 1/{B}$.
\begin{Definition}{\rm
The monomial $z_j^{b_j}$  is called  {\em \L ojasiewicz monomial} if $b_j=B$, i.e. $j\in I_B$. The monomial
$z_j^{b_j}$
is called 
{\em \L ojasiewicz exceptional } if $j\in I_B$ and 
  there exists $k\ne j$ and  a monomial $z_j^{B'}z_k$  in $f(\mathbf z)$ with $B'<B-1$. Otherwise $z_j^B$ is  called {\em a 
  non-exceptional \L ojasiewicz  monomial} (\cite{O-Lo}). }
\end{Definition}
\begin{Proposition}
If $f$ has a non-exceptional  \L ojasiewicz monomial, there exists an analytic curve $\mathbf z(t)$ so that 
the equality, 
$\theta(\mathbf z(t))=1- 1/B$, holds and thus 
\[
\theta_0(f)=1-\frac 1B.
\]
\end{Proposition}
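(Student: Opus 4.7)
The inequality $\theta_0(f)\le 1-1/B$ has already been derived in the preceding discussion: for any analytic curve $\mathbf z(t)\in \mathbb C^{*n}$ with weight $P$, non-degeneracy ensures some $\partial f_P/\partial z_{j_0}(\mathbf a)\ne 0$, giving $\theta(\mathbf z(t))\le 1-1/b_\al'\le 1-1/B$. The case of a curve lying in a coordinate subspace reduces to the interior case by Corollary~\ref{Reduced2}. So the job is only to exhibit a curve realising equality.

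Pick an index $j\in I_B$ for which $z_j^B$ is a non-exceptional \L ojasiewicz monomial, and let $c\ne 0$ be its coefficient in $f$. I would take the simplest test curve one could hope for: set
\[
z_j(t)=t,\qquad z_i(t)=a_i\,t^N\quad(i\ne j),
\]
with $a_i\ne 0$ and $N$ a large integer to be chosen (any $N>B$ suffices). Denote the associated weight by $P=(N,\dots,1,\dots,N)$. Because $f$ is convenient, the restriction $f^{\{j\}}$ is precisely $c\,z_j^B$, so the one-point face $\De(P,f)=\{Be_j\}$ is attained only by this monomial and $d(P)=B$; therefore
\[
f(\mathbf z(t))=c\,t^B+O(t^N),\qquad {\ord}_t f(\mathbf z(t))=B.
\]

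The main task is to check that $\|\partial f(\mathbf z(t))\|$ has order exactly $B-1$, and this is precisely where the non-exceptional hypothesis is used. For the $j$-th partial, the monomial $z_j^B$ contributes $cB\,t^{B-1}$, while every other monomial $\mathbf z^{\nu}$ with $\nu_j\ge 1$ and $\sum_{i\ne j}\nu_i\ge 1$ produces, after differentiation in $z_j$, a term of order $\nu_j-1+N\sum_{i\ne j}\nu_i\ge N$; hence ${\ord}_t\partial_j f(\mathbf z(t))=B-1$. For a partial in a direction $k\ne j$, the relevant monomials are those with $\nu_k\ge 1$; a short calculation shows that a monomial with $\sum_{i\ne j}\nu_i\ge 2$ contributes after differentiating in $z_k$ a term of order $\nu_j+N(\sum_{i\ne j}\nu_i-1)\ge N$, while the monomials with $\sum_{i\ne j}\nu_i=1$ are exactly the bivariate monomials $z_j^{B'}z_k$ whose contribution to $\partial_k f$ has order $B'$. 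The non-exceptional hypothesis on $z_j^B$ forces $B'\ge B-1$ for every such monomial in $f$, so ${\ord}_t\partial_k f(\mathbf z(t))\ge B-1$ for all $k\ne j$.

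Combining these two bounds gives ${\ord}_t\|\partial f(\mathbf z(t))\|=B-1$, whence $\theta(\mathbf z(t))=(B-1)/B=1-1/B$, matching the a priori upper bound and yielding $\theta_0(f)=1-1/B$. The only delicate step is the analysis of $\partial_k f$ for $k\ne j$; this is where the non-exceptional condition does its work, ruling out precisely the monomials $z_j^{B'}z_k$ with $B'<B-1$ that would otherwise lower the order of the gradient below $B-1$. Everything else is bookkeeping on the exponents of monomials, which the choice of a sufficiently large $N$ takes care of uniformly.
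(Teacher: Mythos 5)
Your proposal is correct and follows essentially the same route as the paper: the same test curve $z_{j_0}(t)=t$, $z_i(t)=t^N$ ($N$ large), the observation that $\partial f/\partial z_{j_0}$ has order exactly $B-1$, and the use of the non-exceptionality hypothesis to show that the monomials $z_{j_0}^{B'}z_k$ cannot push $\operatorname{ord}_t\,\partial f/\partial z_k$ below $B-1$. Your write-up is merely more explicit about the exponent bookkeeping than the paper's.
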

\begin{proof}
To see this,  assume that  $z_{j_0}^{b_{j_0}}$ is a non-exceptional \L ojasiewicz monomial. 
Consider   the analytic curve $\mathbf z(t)$ which is defined by  $z_{j_0}(t)=t$ and $z_j(t)=t^N$ for any $j\ne j_0$ where $N$ is a sufficiently large positive integer.
Then it is easy to see that $\frac{\partial f}{\partial z_{j_0} }(\mathbf z(t))=c t^{B-1}+\text{(higher terms)},\,c\ne 0$.
If the derivative $\frac{\partial f}{\partial z_{j} }(\mathbf z),\,j\ne j_0$ contains a monomial $z_{j_0}^a$, it comes from the monomial $z_jz_{j_0}^a$ in $f(\mathbf z)$. By the assumption, $a\ge B-1$. Thus 
 ${\ord}_t\, \frac{\partial f}{\partial z_{j} }(\mathbf z(t))\ge B-1$  for $j\ne j_0$ and  $f(\mathbf z(t))=c t^{B}, c\ne 0$. Therefore it is easy to see that
the equality is satisfied. \end{proof}
\begin{Theorem} Assume that $f(\mathbf z)$ is a convenient non-degenerate function.
Then $\theta_0(f)\le 1- 1/B$.  Furthermore if  there exists a non-exceptional \L ojasiewicz monomial, the equality holds.
\end{Theorem}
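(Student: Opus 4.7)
The theorem consolidates the computations already laid out in \S\ref{Convenient-hypersurface} together with Proposition 19, so the main task of the proof is to assemble these pieces carefully and to handle curves not lying in $\mathbb C^{*n}$. My plan is to verify $\theta_0(f)\le 1-1/B$ by estimating $\theta(\mathbf z(t))$ for every admissible curve and then simply quote Proposition 19 for the equality.

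First, by the Curve Selection Lemma invoked in \S1, it suffices to bound $\theta(\mathbf z(t))$ uniformly over all analytic curves $\mathbf z(t)$ with $\mathbf z(0)=\mathbf 0$ and $\mathbf z(t)\notin f^{-1}(0)$ for $t\ne 0$. Given such a curve, let $I=\{i:z_i(t)\not\equiv 0\}$; since $f$ is convenient, $f^I\not\equiv 0$, so $\mathbb C^I$ is a non-vanishing coordinate subspace. I would then appeal to Proposition~\ref{Reduced} and Corollary~\ref{Reduced2} to replace $\mathbf z(t)$ by the modified curve $\widetilde{\mathbf z}(t)$ with $\widetilde z_i(t)=t^N$ for $i\notin I$ ($N$ large). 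This lies in $\mathbb C^{*n}$ for $t\ne 0$ and satisfies $\theta(\widetilde{\mathbf z}(t))=\theta(\mathbf z(t))$. Thus the supremum of $\theta$ is unchanged if we restrict attention to curves with $\mathbf z(t)\in\mathbb C^{*n}$.

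Second, for such a curve in $\mathbb C^{*n}$ I would reuse the Taylor expansion (\ref{Taylor1}) with weight vector $P=(p_1,\dots,p_n)$ and the non-degeneracy of $f$: there exists $j_0$ with $\frac{\partial f_P}{\partial z_{j_0}}(\mathbf a)\ne 0$, whence
\[
{\ord}_t\,\partial f(\mathbf z(t))\le d(P)-p_{j_0}\le d(P)-p_{\min},\qquad {\ord}_t\,f(\mathbf z(t))\ge d(P).
\]
Dividing gives $\theta(\mathbf z(t))\le 1-\hat p_{\min}=1-\hat p_\alpha$ for any index $\alpha$ achieving the minimum. Since $\hat p_\alpha>0$ (the curve vanishes at the origin, so all $p_j\ge 1$), the point $(0,\dots,1/\hat p_\alpha,\dots,0)$ on the $\alpha$-axis lies on the supporting hyperplane of $\hat P$, and by convenience the Newton boundary meets the $\alpha$-axis at $b_\alpha$. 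Since $\Ga_+(f)$ lies on the non-negative side of this hyperplane, $b_\alpha\ge 1/\hat p_\alpha$, i.e.\ $\hat p_\alpha\ge 1/b_\alpha\ge 1/B$. This yields $\theta(\mathbf z(t))\le 1-1/B$ and hence $\theta_0(f)\le 1-1/B$.

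For the equality assertion, Proposition~19 already constructs an explicit curve attaining $\theta(\mathbf z(t))=1-1/B$ when a non-exceptional \L ojasiewicz monomial exists, giving the reverse inequality. The only delicate point, and the place I expect to need care rather than difficulty, is the geometric step identifying $1/\hat p_\alpha$ with an intercept $b_\alpha'\le b_\alpha$: one must use that $\alpha\in\Var(P)$ and that $\De(P)$ is a face of $\Ga_+(f)$ contained in the supporting hyperplane $\langle\hat P,\nu\rangle=1$, so that the axis intercept of this hyperplane dominates $1/\hat p_\alpha$ while being dominated by the axis intercept $b_\alpha$ of the Newton boundary itself.
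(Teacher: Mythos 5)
Your proof is correct and follows essentially the same route as the paper: reduce to curves in $\mathbb C^{*n}$ via Proposition 8/Corollary 9, use non-degeneracy on the face function $f_P$ to get $\theta(\mathbf z(t))\le 1-\hat p_{\min}$, identify $1/\hat p_\alpha$ with the axis intercept $b_\alpha'$ of the supporting hyperplane and bound it by $b_\alpha\le B$ using convenience, then invoke Proposition 19 for the equality. Your explicit justification of $b_\alpha'\le b_\alpha$ (via $b_\alpha e_\alpha\in\Ga_+(f)$ lying on the non-negative side of the supporting hyperplane) fills in a step the paper only asserts.
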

\begin{Example}{\rm
Consider $f(\mathbf z)=z_1^5+z_1^3z_2+z_2^4+z_3^4$. Then $B=5$ but $z_1^5$  is an exceptional  \L ojasiewicz  monomial. In fact,  \L ojasiewicz exponent is given by 
$\theta_0(f)=1-\frac 14=3/4$.
}
\end{Example}
\subsection{Non-convenient case}
We assume that $f(\mathbf z)$ is non-degenerate and strongly inv-tame but we do not assume the convenience of  $f$.
The singularity is not necessarily isolated.
 Let $\mathcal D$ be the set of equivalent classes $[P]$ with $\dim\,[P]=n$.
 
 For a $[P]\in \mathcal D$, the face function is given as a monomial function $f_P(\mathbf z)=c\,\mathbf z^{\nu(P)}$ and we associate  the total degree
 $|\nu(P)|$ to $[P]$.
\begin{MainTheorem}\label{main-theorem1}
 Assume that $f$ is  non-degenerate and  strongly   inv-tame  function.
Then  the \L ojasiewicz exponent of type (1) has the estimation:
\[
\theta_0(f)\le \max\left\{L,\, \widetilde\theta\right\}
\]
where 
\[\begin{split}
&\widetilde\theta:=\max\,\{1-\hat p_{min}'\,|\, \hat P\in \mathcal W_+\cup\mathcal W_v,\, \dim\,\De(P)=n-1\},\\
&L=\max\{1-{1}/{|\nu(P)|}\,|\,  [P]\in \mathcal D\}.
\end{split}
\]
\end{MainTheorem}

\begin{proof}
Consider an analytic curve $C\in \mathcal P$ defined by $\mathbf z=\mathbf z(t),\,0\le t\le 1$ and $f(\mathbf z(t))\ne 0$ for $t\ne 0$. We may assume that $\mathbf z(t)\in \mathbb C^{*n}$ for $t\ne 0$ by Proposition 8.
 Assume that $\mathbf z(t)$ has the following expansion:
\begin{eqnarray}\label{Taylor2}
z_j(t)&=&a_jt^{r_j}+\text{(higher terms)}, \quad a_j\ne 0,\, j=1,\dots,n,\\
\frac{\partial f}{\partial z_j}(\mathbf z(t))&=&\frac{\partial f_R}{\partial z_j}(\mathbf a) t^{d(R)-p_j}+\text{(higher terms)},\\
f(\mathbf z(t))&=&b t^{d'}+\text{(higher terms)}, \, d'\ge d(R).
\end{eqnarray}
where 
 $R=(r_1,\dots, r_n)$, $\mathbf a=(a_1,\dots,a_n)\in \mathbb C^{*n}$.  We use the same notation as in \cite{Okabook}.
  If $\De(R)$ is a vertex,  $[R]\in \mathcal D$ and   it is clear that $\frac{\partial f_R}{\partial z_j}(\mathbf a)\ne 0$ for any $j\in {\Var}(R)$
  and $\theta(\mathbf z(t))\le 1-\frac 1L$ by Lemma 10.
 
 Now we assume that $\dim\,\De(R)\ge 1$. 
 Assume that $\frac{\partial f_P}{\partial z_j}(\mathbf a)\ne 0$ for some $j\in \widetilde {\Var}(P)$, it is clear 
 from the definition that $\theta(C(t))\le 1-\hat p_j$. Thus 
  $\theta(\mathbf z(t))\le \theta(R)'$ by the non-degeneracy and the strong inv-tameness. Thus the proof is reduced  to the following assertion.

\begin{Assertion} Assume that $\dim\, \De(R)\ge 1$.
Then 

\[
\theta(R)'\le \max\{\theta (P)'\,|\, P\succ R,\,\dim\,\De(P)=n-1, \, P\in \mathcal W_+\cup\mathcal W_{v}\}.
\]
\end{Assertion}
This is proved easily on the  induction on $\dim\, [R]$, using Lemma 15 and Lemma 16.
If $\dim\,[R]=1$,  $R$ is a vertex and there are nothing to be proved.
Take an admissible line segment ${\LS}(P,Q)$, $P_s=(1-s)P+s Q$ and $R=P_{s_0}$ for some $0<s_0<1$.
By Lemma 15 and Lemma 16,   
we have
the estimation
\[\theta(R)'\le \max\{\theta(P)',\theta(Q)'\}
\]
for $P,Q\in \mathcal W_+\cup\mathcal W_v$ and 
the assertion holds for $R$ by the strong local inv-tameness. The assertion holds by the induction's hypothesis
 for  $P$ and $Q$ if $P,Q\in \mathcal W_+\cup\mathcal W_{v}$.
If $P$ is non-vanishing, the estimation is simply replaced by $\theta(R)'\le \theta(Q)'$.
\end{proof}

\subsection{Examples of the estimation of $\theta_0(f)$}
\begin{Example} {\rm  Consider the polynomial $f_1(\bfz)=z_1^4z_2^2+z_1^6z_3+z_2^6z_3^2+z_1^3z_3^6$ considered in Example 5.
 We have
\[
\theta (P)'=\frac{10}{11},\,\theta (Q)'=\frac 89,\,\theta(R)'=\frac 12,\,\theta(S)'=\frac 45, \, \theta(T)'=\frac 56
\]
The region $A,B,C,D$ corresponds to the monomials  $z_1^5z_2^2,\, z_1^6z_3,\, z_2^6z_3^2,\,z_3^6 z_1^3$ respectively  and these region give the bounds $6/7,\,6/7,\, 7/8,\,8/9$ respectively.
Thus $\theta_0(f_1)\le \dfrac{10}{11}$ by Theorem 22.
}
\end{Example}
\begin{Remark}
{\rm 
The estimation by  Main Theorem 22 is not always sharp. In fact,  the equality in the above estimation can not be obtained. 
For  the weight vector $P$, $f_{1P}(\mathbf z)=z_1^6z_3+z_1^4 z_2^2+z_1^3z_3^6$ and we see that $\frac{\partial f_{1P}}{\partial z_2}\ne 0$ on $\mathbb C^{*3}$. As $\hat P=(\frac 5{33},\frac 3{22},\frac 1{11})$, the real contribution for $P$ is from
 $\frac{\partial f_{P}}{\partial y}$.
Thus $\theta(\mathbf z(t))\le 1-3/22=19/22$ for any $\mathbf z(t)$ with ${\wt}(\mathbf z(t))=P$.
The contribution from $Q$ is in fact sharp.  Note that $f_{1Q}$ is given by $z_1^5z_2^2+z_1^6z_3+z_2^6z_3^2$
and one can find $\mathbf z(t)$ with the coefficient vector $\mathbf a$ satisfies 
$\frac{\partial f_{1Q}}{\partial z_1}(\mathbf a)=\frac{\partial f_{1Q}}{\partial z_2}(\mathbf a)=0$. Thus $\theta(\mathbf z(t))=\frac 89$.
As $\frac 89>\frac {19}{22}$, we conclude $\theta_0(f_1)=\frac 89$.
}
\end{Remark}
\subsection{ Is $\theta_0(f)$ a moduli invariant?}
For a given non-degenerate function $f(\mathbf z)$, we ask if the \L ojasiwwicz exponents are constant or not on the moduli space. For this purpose, we consider 
the branched poly-cyclic  covering $\vphi_2:\mathbb C^n\to \mathbb C^n$
and its lift of $f$, defined by $\vphi_2(\mathbf w)=\mathbf z,\, z_i=w_i ^2\,(1\le  i\le n)$ and put $f^{(2)}(\mathbf w):=\vphi^* f(\mathbf w)=f(w_1^2,\dots, w_n^2)$. More precisely 
we consider $f_1(\mathbf z)=z_1^5 z_2^2+z_1^6 z_3+z_2^6 z_3^2+z_3^6 z_1^3$ in Example 1.
Put $f_1^{(2)}(\mathbf w):=f_1(\vphi_2(\mathbf w))=w_1^{10}w_2^4+w_1^{12} w_3^2+w_2^{12} w_3^4+w_3^{12} z_1^6$.
The dual Newton diagram $\Ga^*(f^{(2)})$ is given by the same diagram of $\Ga^*(f_1)$. Only change is that $d(K, f_1^{(2)})=2d(K, f_1)$
for any weight vector $K$. Thus in the normalized vectors,
$\hat P,  \hat Q, \hat R, \hat S, \hat T$ are to be divided by 2. By the same discussion as in the above Remark, we see that $\theta_0(f_1^{(2)})=\frac{17}{18}$.
Let $g(\mathbf w):=f_1^{(2)}(\mathbf w)+w_1^3 w_2^6 w_3^8$. Note that the new monomial $w_1^3 w_2^6 w_3^8$ corresponds to  the midpoint of the edge ${C^{2)}D^{(2)}}$. 
Here $C^{(2)}, D^{(2)}$ are the lift of $C,D$ in  $\Ga(f^{(2)})$. Thus the dual Newton diagram of $g$ is the same  with $\Ga^*(f^{(2)})$.
By the result of \cite{EO}, the family $g_t(\mathbf w):=f_1^{(2)}(\mathbf w)+tw_1^3 w_2^6 w_3^8$  is non-degenerate and strongly  locally inv-tame except a finite exceptional $t$'s. 
The exceptional set $S$ is the union of $\{\pm 2\}$ from the non-degeneracy of $g_{t,T}$ and possibly  some more $t$'s from the non-degeneracy of $g_{t\hat P}$. Actually $f_{t\hat P}=0$ is non-singular in $\mathbb C^{*3}$ for $t\ne \pm 2$ as we can see by a direct calculation, $\frac{\partial g_{t\hat P}}{\partial z_1}=\frac{\partial g_{t\hat P}}{\partial z_2}=\frac{\partial g_{t\hat P}}{\partial z_3}=0$ has no solution in $\mathbb C^{*3}$. Thus $S=\{\pm 2\}$.
This family
has a canonical  Whitney regular stratification and $V(f^{(2)})$ and $V(g)$ are topologically equivalent for any $t\in\mathbb C\setminus S$. 
We assert $\theta_0(g)=\frac{ 21}{22}\, (=1-\frac 6{132})$ 
which comes from the vertex $P=(10,9,6)$ with $d(P, g)=132.$ 
Thus $\theta_0(g)>\theta_0(f_1^{(2)})$ and $\theta_0$ is not constant on the moduli space of $g$.
Here we mean by moduli the space of functions with fixed Newton boundary and local tameness.
For example, we can choose the following curve which satisfies 
$\frac{\partial g_{P}}{\partial w_1}=\frac{\partial g_P}{\partial w_2}=0$:
\[
w_1(s)=\frac 12 \sqrt[6]{3}\sqrt 2\sqrt[6]{-1}\,s^{10},\, w_2(s)=\sqrt[6]{-\frac 18 i\sqrt{6}}\,s^9,\,w_3(s)=s^6.
\]
We can see that 
\[\begin{split}
&{\ord}_s\partial g(\mathbf w(s))=126,\quad {\ord}_s g(\mathbf w(s))=132\\
&\theta(\mathbf w(s))=\frac{126}{132}=\frac{21}{22}.
\end{split}
\]
Unfortunately $g_t$ has $1$-dimensional singularity. 

For isolated singularity case, this does not happen.
In fact, Brzostowski proved the \L ojasiewicz exponent $\eta_0(f)$ of the \L ojasiewicz inequality of type (2) is constant on the moduli space of functions
with fixed Newton boundary and an isolated singularity at the origin (Theorem 1, \cite{Br2}).
On the other hand, $\theta_0(f)$ and $\eta_0(f)$ are related by 
$\theta_0(f)=\eta_0(f)/(1+\eta_0(f))$ by Teissier \cite{Te}.
 I thank  Professor Tadeusz Krasi\'nski for this information.
\begin{Example}{\rm Consider the simplicial weighted homogeneous polynomial 
$f_2(\mathbf z)=z_1^a  z_2^{2}+z_2^b z_3^{2} +z_3^c z_1^{2}$ in Example 2. Then normalized weight is given as 
$\hat P=(\frac{4-2c+bc}{abc+8}, \frac{ac+4-2a}{abc+8},\frac{ab+4-2b}{abc+8})$. Suppose $c\ge a, b$. Then
the contribution from $\hat P$ is $1-\frac{ab-2b+4}{abc+8}$. The contribution from $\hat R,\hat S,\hat T$, which are $\theta(\hat R)',
\theta(\hat S)',\theta(\hat T)')$,  are  given 
by $1-1/c,1-1/b, 1-1/a$ respectively by Theorem 22 but these estimation is not sharp.
For example, $f_{\hat R}=z_1^az_2^2+z_3^cz_1^2$ and $\frac{\partial f_{\hat R}}{\partial z_2}$ can not be zero on $\mathbb C^{*3}$.
Thus the real contribution is $1-1/2=1/2$. The same is true for $\hat S,\hat T$. Thus 
 $\theta_0(f)=1-\frac{ab-2b+4}{abc+8}$. }
\end{Example}
\begin{Example}
{\rm Consider $f_3(\mathbf z)=z_1^4z_2^{2}+z_2^4z_3^{2}+z_3^4z_1^{2}+z_1z_2z_3$ of Example 3 (See Figure 4).
This polynomial has no compact face of dimension 2 in $\Ga(f)$.
We observe that $\theta(S_1)',\theta(T_1)',\theta(R_1)'=\frac 12$ and 
$\theta(S_2)',\theta(T_2)',\theta(R_2)'=\frac 34$. $\mathcal D$ contains 4 regions.
 The pentagon  with vertices $S_2, S_1, T_2, T_1, R_2, R_1$ contribute by $\frac 23$.
 The other triangles contribute by $\frac 56$. Thus we have $\theta_0(f_3)\le \frac 56$.
 In fact, $\theta_0(f_3)=\frac 56$. To see this, consider the triangle region $S_1T_2E_3$
 in Figure 4 and  take an analytic curve, for example,
 $\mathbf z(t)=(t,t,t^N) $ for a sufficiently large. Then the weight vector is given by
 $P=(1,1,N)$ or $\hat P=(\frac 16,\frac 16, \frac N6)$ and $f_{3P}=z_1^4z_2^2$ and
 $\theta(\mathbf z(t))=\frac 56$.
 
}
\end{Example}
\subsection{$\theta_0(f)$ does not behave like Milnor numbers}
We give another example of a delicate behavior of $\theta_0(f)$.
Assume that $f(\mathbf z),g(\mathbf z)$ have isolated singularities at the origin and they are non-degenerate.
Let $\Ga_-(f),\,\Ga_-(g)$ be the cones of $\Ga(f),\Ga(g)$ with the origin. Assume that $\Ga_-(g)\supsetneq \Ga_-(f)$ and 
$\Ga(f)\cap\mathbb R^I=\Ga(g)\cap \mathbb R^I$ for any $I\subsetneq \{1,\dots,n\}$.
Then by Kouchnirenko's theorem (\cite{Ko}), the Milnor numbers satisfies the inequality: $\mu(g)>\mu(f)$. This is not always true for $\theta_0(g)$ and $\theta_0(f)$.

As an example, consider $g_4(\mathbf z)=z_1^9z_2+z_2^{10}z_3+z_3^{11}z_1$ and $f_4(\mathbf z)=g_4(\mathbf z)+z_1^2z_2^2z_3^2$.
Note that $\Ga_-(g_4)\supset\neq \Ga_-(f_4)$.
First, their Milnor numbers are given as
$\mu(g_4)=990$ and $\mu(f_4)=543$. Their dual Newton boundaries are given as Figure 5 and Figure 6.
\L ojasiewicz exponent is given as
$\theta_0(g_4)=\frac{910}{991}=0.91\cdots$.
On the other hand, $\theta_0(f_4)\le \frac{95}{101}=0.94\cdots$ which comes from $T_3=(\frac{35}{101},\frac{19}{202},\frac{6}{101})$.
In fact, we can show that  the equality $\theta_0(f)=\frac{95}{101}$ is taken by the following curve:
\[
z_1(t)=b_1t^{70}, \,z_2(t)=b_2t^{19},\, z_3(t)=b_3t^{12}
\]
where $\mathbf b\in \mathbb C^{3*}$ satisfies the equality
\[\begin{split}
&\frac{\partial f_{T_3}}{\partial z_1}(\mathbf b)=\frac{\partial f_{T_3}}{\partial z_2}(\mathbf b)=0\\
&f_{T_3}(\mathbf z)=z_2^{10}z_3+z_3^{11}z_1+z_1^2z_2^2z_3^2.
\end{split}
\]
Then we see that ${\ord}_t\partial f_4(\mathbf z(t))=190,\,{\ord}_t f_4(\mathbf z(t))=202$.
For example, we can take
\[
b_1=\root 6\of{\frac{5}{16}},\,b2:=\root {12}\of{\frac 1{20}},\, b_3=-1.
\]
Thus we have the inequality: $\theta_0(f_4)>\theta_0(g_4)$, while $\mu(f_4)<\mu(g_4)$.

\begin{figure}[htb]
\setlength{\unitlength}{1bp}
\begin{picture} (600,300) (-120,-80)
{\includegraphics[width=6cm,  bb=0 0 595 842]{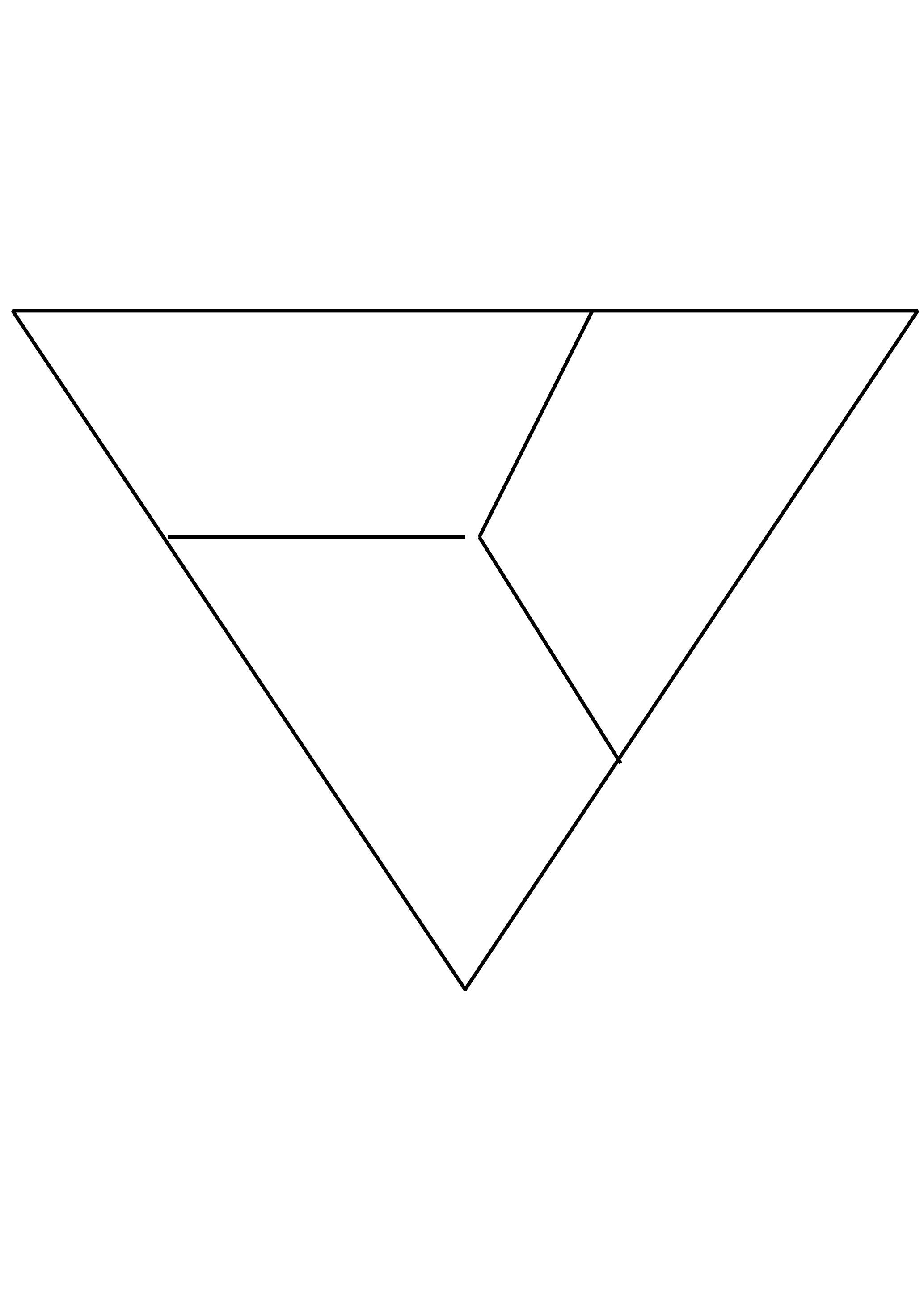}}

\put(5,175){$E_1$} 
\put(-5,180){$\bullet$}
\put(-180,175){$E_2$}
\put(-170,180){$\bullet$}
\put(-100,55){$E_3$}
\put(-88,55){$\bullet$}
\put(-77,186){$S_3$}
\put(-65,180){$\bullet$}
\put(-85,150){$P$}
\put(-85,139){$\bullet$}
\put(-155,140){$S_1$}
\put(-142,139){$\bullet$}
\put(-50,102){$S_2$}
\put(-58,98){$\bullet$}
\put(-280,100){$\hat P=(\frac {100}{991},\frac {91}{991},\frac {81}{991})$}
\put(-280,85){$\hat S_1=(0,1\frac 1{11}),$}
\put(-210,85){$\hat S_2=(\frac19,0,1)$}
\put(-280,70){$\hat S_3=(1,\frac 1{10},0)$}
\put(-280,50){$g_4=z_1^9z_2 +z_2^{10}z_3 +z_3^{11}z_1 $}
\end{picture}
\vspace{-4cm}
\caption{$\Ga^*(g_4)$}\label{DNg}
\label{Graphg}
\end{figure}
\begin{figure}[htb]
\setlength{\unitlength}{1bp}
\begin{picture}(600, 300)(-100,-20)   
{\includegraphics[width=8cm,  bb=0 0 595 842]{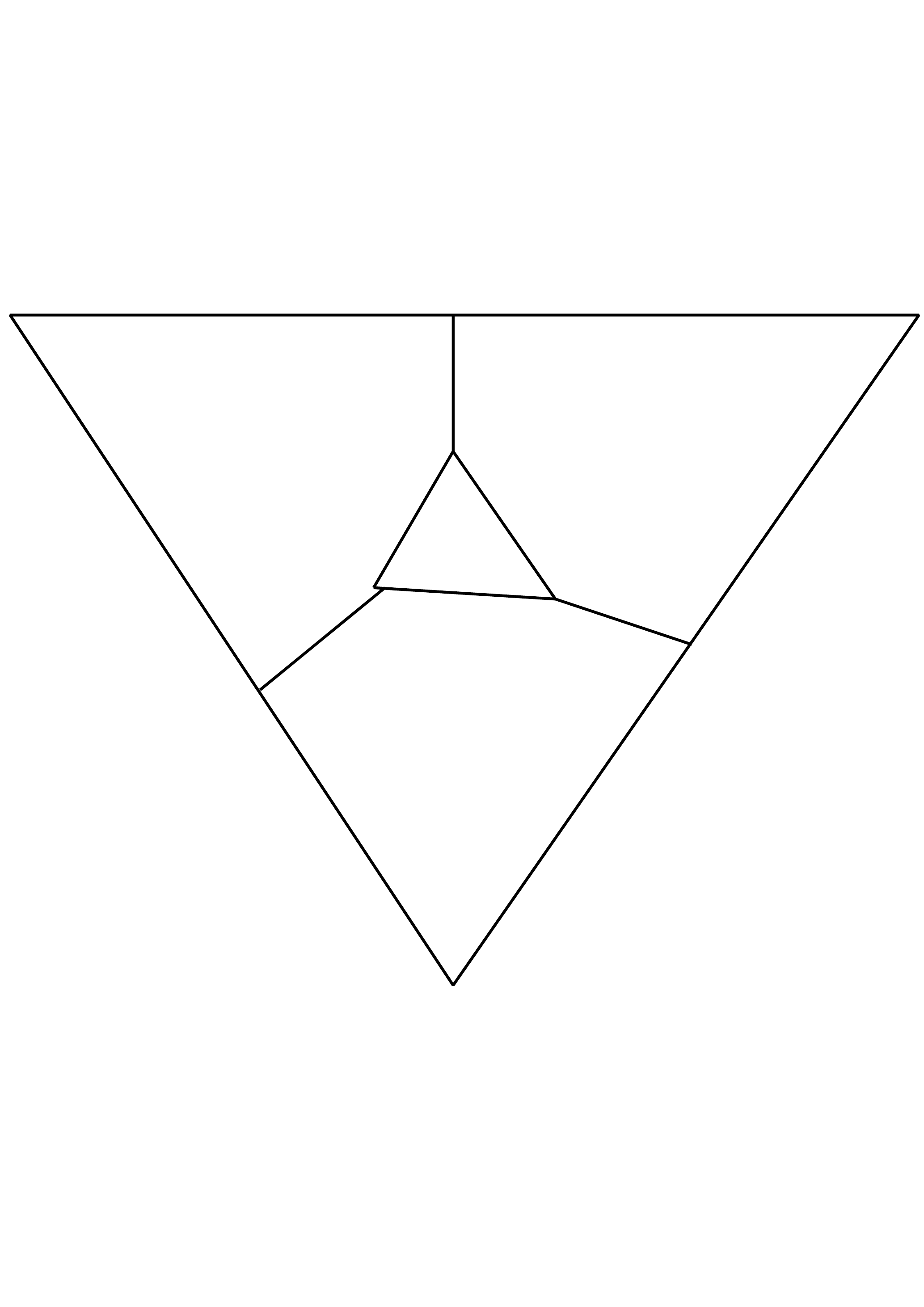}}
\put(5,240){$E_1$}
\put(-5,240){$\bullet$}
\put(-250,240){$E_2$}
\put(-226,240){$\bullet$}
\put(-110,70){$E_3$}
\put(-118,76){$\bullet$}
\put(-114,254){$S_3$}
\put(-118,241){$\bullet$}
\put(-135,163){$T_1$}
\put(-137,173){$\bullet$}
\put(-95,163){$T_2$}
\put(-93,172){$\bullet$}
\put(-110,205){$T_3$}
\put(-118,205){$\bullet$}
\put(-190,145){$S_1$}
\put(-165,148){$\bullet$}
\put(-50,155){$S_2$}
\put(-59,160){$\bullet$}
\put(-320,100){$\hat P=(\frac {100}{991},\frac {91}{991},\frac {81}{991})$}
\put(-320,85){$\hat S_1=(0,1\frac 1{11}),$}
\put(-250,85){$\hat S_2=(\frac19,0,1)$}
\put(-320,70){$\hat S_3=(1,\frac 1{10},0)$}
\put(-320,50){$\hat T_1=(\frac{13}{178},\frac{61}{178},\frac{15}{178}),\hat T_2=(\frac{17}{164},\frac{11}{164},\frac{27}{82}), \hat T_3=(\frac{35}{101},\frac{19}{202},\frac 6{101})$}
\put(-320,30){$g_4=z_1^9z_2 +z_2^{10}z_3 +z_3^{11}z_1 +z_1^2z_2^2z_3^2$}

\end{picture}
\vspace{-2.0cm}
\caption{$f_4(\mathbf z)$}\label{DNf}
\label{Graphf}
\end{figure}

\newpage
\section{\L ojasiewicz exponents of  non-irreducible functions}
In this section, we study \L ojasiewicz exponents of reducible functions which are associated with non-degenerate complete intersection varieties.
\subsection{A function associated with a convenient non-degenerate complete intersection variety}
We consider a family of functions $\mathcal F:=\{f_1,\dots, f_k\}$. We say $\mathcal F$ is
 {\em a  defining family of convenient non-degenerate complete intersection varieties} 
if each $f_\al$ is a convenient non-degenerate function such that 
for  each $I\subset \{1,\dots, k\}$, $V(I):=\{\mathbf z\in \mathbb C^n\,|\, f_i(\mathbf z)=0,\,i\in I\}$ is a non-degenerate complete intersection variety
in the sense of Khovanskii (\cite {Kh2, Okabook}).
 Namely for any strictly positive weight vector $P$,
the variety $V_I(P)^*:=\{\mathbf z\in \mathbb C^{*n}|f_{jP}(\mathbf z)=0\,|\, j\in I\}$ is a smooth complete intersection variety in $\mathbb C^{*n}$.
We consider the product function
$f=f_1\dots f_k$  and  we call  $f$ as {\em the  function associated  with the family $\mathcal F$}. Note that $f$ is also a convenient function.
In \cite{EO14}, we considered a similar family, which we called  {\em a Newton-admissible  family} $\mathcal F=\{f_{1s},\dots, f_{ks}\}$  with one parameter $s\in \mathbb C$ and 
the associated  product function $f_s=f_{1s}\dots f_{ks}$ in $s\in \mathbb C$ to study the topological stability of the one-parameter  family of hypersurfaces $V_s=f_s\inv(0)$. 

In this paper,  we assume that each function $f_j$ is convenient  and has no parameter $s$. However the argument below can be done  in the exact same way  for one parameter family  if their Newton boundary $\Ga(f_{js})$ is independent of $s$. 
As in \S 3.1, we put $b_{\al,j},\ 1\le \al\le k,\,1\le j\le n$ be the unique point of $\Ga(f_\al)$  on $z_j$-axis.
So $z_j^{b_\al,j}$ is a monomial of $f_\al(\mathbf z)$ with non-zero coefficient.
We will give  an explicit estimation  for $\theta_0(f)$.
The hypersurface $V(f)$ has $k$ irreducible components $V(f_\al),\,\al=1,\dots, k$ and $V(f)$ has non-isolated singularities if $k\ge 2$. The   singular locus of $V(f)$ is the union $\cup_{i\ne j}V(f_i,f_j)$. Here $V(f_i,f_j):=\{f_i=f_j=0\}$.

We are interested in the estimation of the \L ojasiewicz  component of $f$.
As in the previous section, we  start to consider an analytic curve $\mathbf z(t),\,0\le t\le 1$ which starts from the origin and
$\mathbf z(t)\in \mathbb C^{*n}\setminus V(f)$ for $t\ne 0$ and  we consider  its Taylor expansion:
\begin{eqnarray}\label{eq11}
&z_j(t)&=a_jt^{p_j}+\text{(higher terms)}, \quad p_j>0,  \, a_j\ne 0,\, j=1,\dots,n.
\end{eqnarray}
(Actually we  also consider the case $\mathbf z(t)\in \mathbb C^{*J}\, (t\ne 0),\, J\subset \{1,\dots,k\}$ such that $f^J$ s not constantly zero. However  the estimation is reduced to the case $\mathbb C^{*n}$ by Proposition 8.)
Put $P=(p_1,\dots,p_n)$ and $\mathbf a=(a_1,\dots, a_n)$ as before.
Let $z_j^{b_{\al j}}$ be the monomial of $\Ga(f_\al)$ on the $j$-th axis for $j=1,\dots, n$
and $1\le \al\le k$.
Let 
\begin{eqnarray}
f_\al(\mathbf z(t))&=&c_\al t^{d_\al}+\text{(higher terms)},\,c_\al\ne 0,\,1\le\al\le k,\label{eq12}\\
 f(\mathbf z(t))&=&c  t^{  d }+\text{(higher terms)},\, c\ne 0,\label{eq13}\\
 d_\al&\ge& d(P,f_\al),\quad   d\ge d(P,f)=\sum_{\al}d(P,f_\al).\label{eq14}
 \end{eqnarray}
 We use this expansion assumption throughout this paper.
Note that  $  d =\sum_{\al=1}^k d_\al$ and $c=c_1\cdots c_k$. 
Put $e_\al:=d(P,f_\al)$ and 
$e =\sum_{\al=1}^k e_\al$.
We observe that 
\begin{eqnarray}\label{eq15}
\frac{\partial f_\al}{\partial z_j}(\mathbf z(t))&=&\frac{\partial f_{\al P}}{\partial z_j}(\mathbf a) t^{e_\al-p_j}+\text{(higher terms)}\notag\\
\partial f(\mathbf z(t))&=&\sum_{\al=1}^k\left( \prod_{\be\ne \al}f_\be(\mathbf z(t))\right)\, \partial f_\al(\mathbf z(t)),\notag
\end{eqnarray}
Therefore by (12) and (13),
\begin{eqnarray}
\frac{\partial f}{\partial z_j}(\mathbf z(t))&=&\sum_{\al=1}^k \left(\prod_{\be\ne \al}f_\be(\mathbf z(t))\right)\,  \frac{\partial f_{\al}}{\partial z_j}(\mathbf z(t))\\
&=&\sum_{\al=1}^k\left\{
\frac{\partial f_{\al P}}{\partial  z_j}(\mathbf a)\frac{c }{c_\al} t^{  d -d_\al+e_\al-p_j}+\text{(higher terms)}
\right\}\notag
\end{eqnarray}
Put $B_j:=\sum_{\al=1}^k b_{\al,j}$ and $B=\max\{B_j\,|\, j=1,\dots, n\}$. Note that 
$z_j^{B_j}$ has  a non-zero coefficient in the expansion of $f(\mathbf z)$ and it corresponds to the unique point of $\Ga(f)$ on the $z_j$-axis.
Let $p_{\min}:=\min\{p_j\,|\, j=1,\dots, n\}$,  $I_{min}=\{j\,|\, p_j=p_{\min}\}$. 
\subsubsection{Case 1.  $\mathbf a$ is generic}\label{generic case}
We consider the case $\mathbf a$ is generic so that $f_P(\mathbf a)=\prod_{\al=1}^k f_{\al P}(\mathbf a)\ne 0$. 
This implies that  $d_\al=e_\al$ for any $\al=1,\dots, k$. As $f_P(\mathbf z)$ is a weighted homogeneous polynomial of degree $ e =\sum_{\al}e_\al$ with respect to the weight vector $P$, $0$ is the only possible critical value of $f_P$. Thus $  d =e=\sum_{\al=1}^k e_\al$ and 
$\partial f_P(\mathbf a)\ne \mathbf 0$ and 
\[
\frac{{{{\ord}_t}}\, \partial f(\mathbf z(t))}{{{{\ord}_t}}\,f(\mathbf z(t))}\le \frac{  e -p_{ min}}{  e }=1-\frac{p_{min}}{  e }.\]
We define a rational number $b_{\al,j}'$ by $b_{\al,j}'p_j=e_\al$ for $j\in I_{min}$.
 Then $p_j B_j'=e $ where $B_j'=\sum_{\al=1}^k b_{\al,j}'$ and  $j_0$ is fixed  in $I_{\min}$. Thus $b_{\al,_{j_0}}'\le b_{\al,j_0}$.
Note  the equality $p_{ j_0}\sum_{\al=1}^k b_{\al,  j_0}'=p_{j_0} B_{j_0}'=  e $.
Thus the above estimation implies 
\begin{eqnarray}\label{hat-theta}
\frac{{{{\ord}_t}}\, \partial f(\mathbf z(t))}{{{{\ord}_t}}\,f(\mathbf z(t))}
\le 1-\frac 1{B_{j_0}'}\le 1-\frac 1{B_{j_0}}\le 1-\frac 1 B.
\end{eqnarray}
\subsubsection{Case 2. $\mathbf a$ is not generic}
This case is  more complicated.
We consider non-generic coefficient vector $\mathbf a$. So we assume that there exists   $\al,\,1\le\al\le k$ such that  $f_{\al P}(\mathbf a)=0$.
Consider the defect numbers $d_j':=d_j-e_j,\, 1\le j\le k$ and changing the numbering of $f_j,\,1\le j\le k$ if necessary, we
assume for simplicity 
\begin{eqnarray}\label{eq17}
d_1'\le d_2'\le\dots \le d_{\ell-1}'< d_\ell'=\dots=d_k'\, \, 
\end{eqnarray}
for some $\ell,\,1\le \ell\le k$. Therefore  $\ell:=\min\,\{\al\,|\, d_\al=d_k'\}$.
Note  that $f_{\al P}(\mathbf a)\ne 0$ if and only if  $d_\al'=0$. In particular 
$f_{\al P}(\mathbf a)=0$  as $d_\al'>0$ for $\ell\le \al\le k$.  We have  the estimation:
\begin{eqnarray*}
&{{{{\ord}_t}}}\,&\left\{\left( \prod_{\be\ne \al}{f_\be(\mathbf z(t))}\right)\frac{\partial f_\al}{\partial z_j}(\mathbf z(t))\right\}\\
&\quad \ge&   d -d_\al +(e_\al-p_j)= d-d_\al'-p_j,\,  1\le  j\le n.
\end{eqnarray*}
Note that 
\begin{eqnarray}\label{eq18} 
d-d_1'-p_j&\ge&\dots\ge d-d_{\ell-1}'-p_j\\
&>&d-d_\ell'-p_j=\cdots=d-d_k'-p_j.\notag
\end{eqnarray}
and  finally we have the expression:
\begin{eqnarray}\label{non-zero}
\frac{\partial f}{\partial z_j}(\mathbf z(t))=\left(\sum_{\al=\ell}^k \frac c{c_\al}\frac{\partial f_{\al P}}{\partial z_j}(\mathbf a)\right)
t^{d-d_\ell'-p_j}+\text{(higher terms)}.
\end{eqnarray}
\begin{Assertion}\label{lin-ind}
There exists some $j_0$ such that 
${\ord}_t\,\frac{\partial f}{\partial z_{j_0}}(\mathbf z(t))=d-d_\ell'-p_{j_0}$.
\end{Assertion}
\begin{proof}
By the assumption,  we have $d_\ell'=\cdots=d_k'$. For the proof of the assertion, 
we use   the non-degeneracy assumption of the complete intersection variety
$V_I(P)^*:=\{\mathbf z\in \mathbb C^{*n}\,|\, f_{\al P}(\mathbf z)=0,\, \al\in I\}$ with $I=\{\ell,\dots,k\}$.  By the assumption,  $\mathbf a\in V_I(P)^*$.
Assume that $\sum_{\al=\ell}^k \frac c{c_\al}\frac{\partial f_{\al P}}{\partial z_j}(\mathbf a)=0$ for any $j$.
This implies
$\sum_{\al=\ell}^k \frac c{c_\al} \partial f_{\al P}(\mathbf a)=0$ and it gives a non-trivial linear relation among  gradient vectors
$\partial f_{\ell P}(\mathbf a),\dots, \partial f_{k P}(\mathbf a)$ which is a contradiction to
the non-degeneracy  of the complete intersection assumption $V_I(P)^*$.
\end{proof} 
Thus 
there exists $j_0,\,1\le j_0\le n$ so that
\[
\sum_{\al=\ell}^k \frac c{c_\al}\frac{\partial f_{\al P}}{\partial z_{j_0}}(\mathbf a)\ne 0,\, \text{that is},\,\,
{\ord}_t\frac{\partial f}{\partial z_{j_0}}(\mathbf z(t)) =d-d_\ell'-p_{j_0}.
\] 
Consider the integers: 
 \[D_{\al-1}':=d_1'+\dots+d_{\al-1}',\,1\le \al\le k . 
\]
We have
\[\begin{split}
{\ord}_t f(\mathbf z(t))&=d_1+\dots+d_k
=D_{\ell-1}'+(k-\ell)d_\ell'+ e,\\
{\ord}_t\,\partial f(\mathbf z(t))
&=\inf\{ {{\ord}_t}\, \frac{\partial f}{\partial z_j}(\mathbf z(t))\,|\, 1\le j\le n\}\\
&\le
\sup\{ d -d_\ell'-p_j\,|\, 1\le j\le n\}\\
&\le D_{\ell-1}'+(k-\ell-1){d_\ell}'+ e-\frac {   e }{ B} 
\end{split}
\]
Here we have used the equality $p_j\ge e/B_j\ge e/B$ and 
$d_\al=d_\al'+e_\al$,  in particular $d_\al=d_\ell'+e_\al$ for $\al\ge \ell$. Thus we have
\begin{eqnarray}
&\theta(\mathbf z(t))=\dfrac{{\ord}_t\,\partial f (z(t))}{{\ord}_t\,f(\mathbf z(t))}\le F_\ell
\end{eqnarray}
where $F_\ell$ is defined by the following:
\[\begin{split}
F_\ell:=&\frac{D_{\ell-1}'+(k-\ell-1)d_\ell'+e-  e / B}{D_{\ell-1}'+(k-\ell )d_\ell'+ e}.
\end{split}
\]
Note that under the assumption that $\mathbf z(t)$ has the  weight vector $P$ and (17), $d_1',\dots, d_\ell'$ are not constant but the other numbers are constant. Originally $d_\ell'$ is an integer but we extend to real numbers so that 
$F_\ell$ is a function of $d_\ell'$ on the interval $[d_{\ell-1}',\infty)$, fixing  $d_1',\dots, d_{\ell-1}'$.
Note that $F_0=1-1/B$.

\subsection{Comparison with $F_{\ell-1}$}
We want to  show $F_\ell\le 1- 1/B$.
We assert that 
 $F_\ell$  is monotone decreasing function of $d_\ell'$, fixing $d_1',\dots, d_{\ell-1}'$ where $d_j':=d_j-e_j$.
 Here $d_\ell'$ moves on  the interval $[d_{\ell-1}',\infty)$.
In fact, the differential of the right hand side in $d_\ell'$ is given as 
\[
\frac{\partial F_\ell}{\partial d_\ell'}=\frac{-D_{\ell-1}'- e+(k-\ell)   e /B}{(D_{\ell-1}'+(k-\ell)d_\ell'+ e)^2}. 
\]
We assert that 
\begin{Lemma} $F_\ell$ is monotone decreasing function of $d_\ell'$ as 
\[
\frac{\partial F_\ell}{\partial d_\ell'}\le 0.
\]
\end{Lemma}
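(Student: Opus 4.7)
The plan is to show that the numerator in the derivative formula already computed in the excerpt is non-positive, since the denominator is a square and hence non-negative. Concretely, I must establish
\[
-D_{\ell-1}' - e + (k-\ell)\,e/B \;\le\; 0,
\]
and since $D_{\ell-1}' = d_1' + \cdots + d_{\ell-1}' \ge 0$ by the ordering in \eqref{eq17}, it suffices to verify the simpler inequality $(k-\ell)\,e/B \le e$, i.e.\ $k-\ell \le B$.

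The key input is the convenience hypothesis on each $f_\al$. Convenience gives, for every pair $(\al,j)$, a monomial $z_j^{b_{\al,j}}$ appearing in $f_\al$ with a positive integer exponent $b_{\al,j} \ge 1$. Hence for any fixed $j$,
\[
B_j \;=\; \sum_{\al=1}^k b_{\al,j} \;\ge\; k,
\]
and taking the max over $j$ yields $B \ge k$. Combined with $\ell \ge 1$, this gives $k - \ell \le k - 1 < k \le B$, so $(k-\ell)/B < 1$. Plugging back, the numerator satisfies
\[
-D_{\ell-1}' - e + (k-\ell)\,e/B \;\le\; -D_{\ell-1}' - e + e \;=\; -D_{\ell-1}' \;\le\; 0,
\]
which is exactly what is needed.

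Therefore $\partial F_\ell/\partial d_\ell' \le 0$, proving that $F_\ell$ is monotone decreasing in $d_\ell'$ on the interval $[d_{\ell-1}',\infty)$, with the other defects $d_1',\dots,d_{\ell-1}'$ held fixed. The step that requires a little care is only the invocation of convenience to secure $B \ge k$; everything else is an immediate sign check on the explicit numerator computed just before the lemma statement.
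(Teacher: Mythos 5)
Your proof is correct and follows essentially the same route as the paper: both reduce the claim to the sign of the explicit numerator $-D_{\ell-1}'-e+(k-\ell)e/B$ and settle it via $D_{\ell-1}'\ge 0$ together with the inequality $B\ge k$, which the paper calls obvious and you justify from convenience ($b_{\al,j}\ge 1$ for every $\al,j$). No gaps.
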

\begin{proof}
The numerator of the differential $ {\partial F_\ell}/{\partial d_\ell'}  $ can be estimated as 
\[\begin{split}
-D_{\ell-1}'- e+(k-\ell) e /B&=-D_{\ell-1}'-  e (1-\frac{k-\ell}B)\\
&\le 0.
\end{split}
\]
Here we have used the obvious inequality $B\ge k$.
\end{proof}
Thus putting $d_\ell'=d_{\ell-1}'$, we  get the estimation
$\theta_0(f)\le F_\ell\le F_{\ell-1}$
where $F_{\ell-1}$ is obtained by substituting $d_\ell'=d_{\ell-1}'$:
\[
F_{\ell-1}:=F_\ell|_{d_\ell'=d_{\ell-1}'}=\frac{D_{\ell-2}'+(k-\ell)d_{\ell-1}'+e-  e / B}{D_{\ell-2}'+(k-\ell +1)d_{\ell-1}'+ e}
\]
where $D_{\ell-2}'=d_1'+\dots+d_{\ell-2}',\, e=e_{1}+\dots+e_k$. 
\subsection{ \L ojasiewicz exponents for  the product functions}
Now we  are ready to state our main result for the product function.
\begin{MainTheorem}\label{Product-Lo}
 Let $f=f_1\cdots f_k$ be the product function associated to a generating family of  convenient non-degenerate 
complete intersection varieties $\mathcal F=\{f_1,\dots, f_k\}$. Then the \L ojasiewicz exponent of type (1) satisfies the   inequality:
$ \theta_0(f)\le 1-1/B$ and the equality holds if $f$  has a \L ojasiewicz non-exceptional  monomial.
\end{MainTheorem}
\begin{proof}
Continuing the above argument repeatedly,  $\theta_0(f)$ can be estimated  by assuming
$d_\ell'=\dots=d_1'$ as follows:
\[
F_\ell\le F_{\ell-1}\le\cdots\le F_1=\frac{(k-1)d_1'+e - {e }/B }{(k-1)d_1'+e }.
\]
 As $F_1$ is also a monotone increasing function of $d_1'$, putting $d_1'=0$,
    we conclude $F_\ell\le F_0=1- 1/B$. This implies
\[
 \theta_0(f)\le  \frac{e - {e }/B}{e }=1-\frac 1B.
\]
For the existence of the curve attaining the equality under the assumption of the existence of \L ojasiewicz non-exceptional monomial,  we do the same argument as in \S 3.1.
\end{proof}

\subsection{Generalization  to non-reduced functions}
We will show  that   \L ojasiewicz exponent of  a non-reduced expression is determined by the reduced one.
First suppose that $f$ is a reduced function and   let $g(\mathbf z)=f^m(\mathbf z)$.
Let $\mathbf z(t),\,0\le t\le 1$ be an analytic curve starting from the origin and $\mathbf z(t)\in \mathbb C^n\setminus V(f)$ for $t\ne 0$ as before. 
Then we  observe that
\begin{eqnarray}\label{non-reduced}
{\ord}_t\frac{\partial g}{\partial z_j}(\mathbf z(t))=(m-1){\ord}_tf(\mathbf z(t))+
{\ord}_t\frac{\partial f}{\partial z_j}(\mathbf z(t)).
\end{eqnarray}
To distinguish two \L ojasiewicz exponents of $f$ and $g$, we put  
\[\begin{split}
\theta_{f}(\mathbf z(t)):=\frac{{\ord}_t\partial f(\mathbf z(t))}{{\ord}_t f(\mathbf z(t))},\,\,
\theta_{g}(\mathbf z(t)):=\frac{{\ord}_t\partial g(\mathbf z(t))}{{\ord}_t g(\mathbf z(t))}.
\end{split}
\]
In particular, we have the equality
\[\begin{split}
\theta_g(\mathbf z(t))&=
\frac{{\ord}_t\frac{\partial g}{\partial z_j}(\mathbf z(t))}{{\ord}_t g(\mathbf z(t))}\\
&= \frac{{(m-1){\ord}_t f(\mathbf z(t))+\ord}_t\frac{\partial f}{\partial z_j}(\mathbf z(t))}{m\,{\ord}_t f(\mathbf z(t))}\\
&=\frac{m-1}m+\frac 1m \theta_f(\mathbf z(t)).
\end{split}
\]
Thus we have 
\begin{Proposition} The \L ojasiewicz exponents  of $f$ and $g=f^m$ are 
 related by the equality:
\[
\theta_0(g)=\frac {m-1}m+\frac 1m \theta_0(f).
\]
\end{Proposition}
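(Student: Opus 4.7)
\medskip

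The plan is to derive the stated equality directly from the pointwise relation already computed in the paragraph preceding the statement, and then to transfer the identity from individual curves to the supremum via the Curve Selection Lemma.

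First I would observe that the product rule gives $\partial g(\mathbf z) = m\, f(\mathbf z)^{m-1}\, \partial f(\mathbf z)$, so for any admissible curve $\mathbf z(t)$ with $\mathbf z(0)=\mathbf 0$ and $\mathbf z(t)\notin V(f)$ for $t\ne 0$, one has
\[
{\ord}_t\,\partial g(\mathbf z(t)) = (m-1)\,{\ord}_t f(\mathbf z(t)) + {\ord}_t\,\partial f(\mathbf z(t)),
\]
which is exactly equation (22). Since $g(\mathbf z(t)) = f(\mathbf z(t))^m$, we also have ${\ord}_t g(\mathbf z(t)) = m\,{\ord}_t f(\mathbf z(t))$. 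Dividing the first identity by the second gives the pointwise relation
\[
\theta_g(\mathbf z(t)) = \frac{m-1}{m} + \frac{1}{m}\,\theta_f(\mathbf z(t)),
\]
already displayed just above the proposition.

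Next I would note that the sets of admissible curves for $f$ and for $g$ coincide: an analytic arc $\mathbf z(t)$ with $\mathbf z(0)=\mathbf 0$ satisfies $\mathbf z(t)\notin g^{-1}(0)$ for $t\ne 0$ if and only if $\mathbf z(t)\notin f^{-1}(0)$ for $t\ne 0$, because $V(g)=V(f)$. Consequently the pointwise relation above is a strictly increasing affine function (slope $1/m>0$) sending the ratio set of $f$ bijectively onto the ratio set of $g$.

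Finally, by the Curve Selection Lemma characterization of $\theta_0$ recalled in Section~1, $\theta_0(f)$ is the supremum of $\theta_f(\mathbf z(t))$ over admissible analytic curves, and likewise for $g$. Taking the supremum of both sides of the pointwise identity, and using that $x\mapsto \tfrac{m-1}{m}+\tfrac{1}{m}x$ is continuous and monotone, yields
\[
\theta_0(g) = \frac{m-1}{m} + \frac{1}{m}\,\theta_0(f),
\]
as claimed. There is no real obstacle here; the only point requiring a remark is the identification of the admissible curve classes for $f$ and $g$, which is immediate from $V(f)=V(g)$.
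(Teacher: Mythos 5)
Your argument is correct and is essentially the paper's own: the paper derives the same pointwise identity $\theta_g(\mathbf z(t))=\frac{m-1}{m}+\frac 1m\theta_f(\mathbf z(t))$ from equation (22) and then passes to $\theta_0$ by taking the supremum over curves. Your only addition is to make explicit the (immediate) identification of admissible curve classes via $V(f)=V(g)$ and the monotonicity of the affine map, which the paper leaves tacit.
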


This observation can be generalized to our product function $f$. discussed in \S 4.1.
We consider a defining family $ \mathcal F=\{f_1,\dots, f_k\}$
  of convenient non-degenerate complete intersection varieties as \S 4.1.
  Let $f(\mathbf z):=f_1(\mathbf z)\cdots f_k(\mathbf z)$.
  We consider also non-reduced product function 
  \[
  g(\mathbf z) =  f_1^{m_1}(\mathbf z) \cdots f_k^{m_k}(\mathbf z)
  \]
  where  $m_1,\dots, m_k$ are positive integers.
  Let $\mathbf z(t)$ be an analytic curve  expanded as (11), (12) and (13).
  We use the same notations of numbers $e_j,d_j, d_j',  e$.  
    We define new  integers $\widetilde d_j,\,\widetilde e_j,\widetilde d,\widetilde e $ and complex numbers $\widetilde c_\al, \widetilde c, \widetilde d$ 
    as
    \[\begin{split}
    \widetilde  d_\al&=m_\al d_\al,\, \widetilde e_\al=m_\al e_\al, \,\widetilde d=\sum_{\al=1}^k m_\al d_\al,\, \widetilde e=\sum_{\al=1}^k m_\al e_\al\\
    \widetilde c_\al&=c_\al^{m_\al},\, \widetilde c=\prod_{\al=1}^k c_\al^{m_\al}
    \end{split}
    \]
    so that 
   \begin{eqnarray*}
g_\al(\mathbf z(t))&=&f_\al(\mathbf z(t))^{m_\al}=\widetilde c_\al t^{\widetilde d_\al}+\text{(higher terms)},\,1\le\al\le k\\
 g(\mathbf z(t))&=&\widetilde c\,  t^{ \widetilde  d }+\text{(higher terms)}.
 \end{eqnarray*}
 We work under the  same assumption (17):
 \begin{eqnarray}
   d_1'\le   d_{2}' \dots\le   d_{\ell-1}'<  d_\ell'=\cdots=  d_k'. \notag
 \end{eqnarray}
 We proceed by the exact same argument as  the one in the reduced case. 
 The equality (19) is replaced as 
\begin{eqnarray}
 \frac{\partial g}{\partial z_j}(\mathbf z(t))&=& \sum_{\al=1}^k \left(\left(\frac{m_\al }{f_\al}g\right)(\mathbf z(t))\frac{\partial f_\al}{\partial z_j} (\mathbf z(t))\right)\\
 &=&\left(\sum_{\al=\ell}^k \frac{m_\al \widetilde c}{\widetilde c_\al}
 \frac{\partial f_{\al P}}{\partial z_j}(\mathbf a)
 \right)
 t^{\widetilde d- d_\ell'-p_j}+\text{(higher terms)}.\notag
 \end{eqnarray}
 Define $\widetilde D_\al$ in the same manner as in \S 4.1:
 \[
 \widetilde D_\al=\widetilde d_1+\dots+\widetilde d_\al=\sum_{i=1}^\al m_id_i.
 \]
 $\widetilde B_j$ corresponds the point $\Ga(g)\cap\{z_j\text{-axis}\}$
 which is equal to $\sum_{\al=1}^k m_\al b_{\al,j}$ and $\widetilde B$ is the maximum of $\{\widetilde B_1,\dots,\widetilde B_n\}$.
 So $p_{min}\widetilde B\ge \widetilde e$.
 As 
 the gradient  vectors
 \[
 \left\{
 \frac{\partial f_{\al P}}{\partial z_j}(\mathbf a)\,|\, \al=\ell,\dots, k\right\}
 \]
  are linearly independent by the non-degeneracy of the intersection variety $V_I^*(P)$ (see the proof of Assertion 28), we have
 \[\begin{split}
 \ord\,g(\mathbf z(t))&=\widetilde d_1+\dots+\widetilde d_k=\widetilde D_{\ell-1}'+(m_\ell+\dots+m_k)d_\ell'+\widetilde e,\\
 \ord\,\partial g(\mathbf z(t))&\le\widetilde d-d_\ell'-p_{min}\le
 \widetilde D_{\ell-1}'+(m_\ell+\dots+m_k-1)d_\ell'+\widetilde e-\frac{\widetilde e}{\widetilde B}.
 \end{split}
 \]
  Thus
  we can modify  equality  (20) as :
 \begin{eqnarray}
 \theta_g(\mathbf z(t))=\frac{{\ord}_t\partial g(\mathbf z(t))}{{\ord}_t g(\mathbf z(t))}\le \widetilde F_\ell
 \end{eqnarray}
 where
 \[
 \widetilde F_\ell=\frac{ \widetilde D_{\ell-1}'+( m_\ell+\dots+m_k-1) d_\ell'+{\widetilde e} -\widetilde e/\widetilde B  }
 { \widetilde D_{\ell-1}'+(m_\ell+\dots+m_k) d_\ell'+{\widetilde e} }.
 \]
 and we have
 \[
 \frac{\partial \widetilde F_\ell}{\partial d_\ell'}=
 \frac{-\widetilde D_\ell-\widetilde e+(m_\ell+\cdots+m_k)\widetilde e/\widetilde B  }{ (\widetilde D_{\ell-1}'+(m_\ell+\dots+m_k) d_\ell'+{\widetilde e}) ^2}<0
 \]
 where  the negativity is derived from the fact $\widetilde B\ge m_1+\dots+m_k$.
By the exact same argument, we get the generalization of Theorem 30:
 \begin{Theorem} The \L ojasiewicz exponent of $g=f_1^{m_1}\cdots f_k^{m_k}$ can be estimated as 
 \[
 \theta_0(g)\le  
 1-\frac 1{\widetilde B}.
 \]
 Furthermore, if $g$ has a non-exceptional \L ojasiewicz monomial, the equality holds.
 \end{Theorem}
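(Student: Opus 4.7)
The plan is to follow the same induction-on-$\ell$ strategy that yielded Theorem 30 in the reduced case, but now using the tilded quantities set up above. All the hard analytic input has already been done: inequality (22) gives $\theta_g(\mathbf z(t))\le \widetilde F_\ell$ for a curve $\mathbf z(t)\in\mathbb C^{*n}$ with weight $P$, and the displayed formula for $\partial \widetilde F_\ell/\partial d_\ell'$ together with the inequality $\widetilde B\ge m_1+\cdots+m_k$ shows $\widetilde F_\ell$ is a monotone decreasing function of $d_\ell'$ on the interval $[d_{\ell-1}',\infty)$.

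First I would handle an arbitrary analytic curve $\mathbf z(t)\in\mathbb C^n\setminus V(g)$ with $\mathbf z(0)=\mathbf 0$. Using Proposition 8 and Corollary 9, applied to $g$ in place of $f$, I may replace $\mathbf z(t)$ by a modified curve $\widetilde{\mathbf z}(t)\in\mathbb C^{*n}$ without changing $\theta_g$. So I can assume $\mathbf z(t)\in\mathbb C^{*n}$ and has a weight vector $P=(p_1,\dots,p_n)$ as in (11). After reordering the $f_\al$ so that (17) holds, inequality (22) applies, giving $\theta_g(\mathbf z(t))\le \widetilde F_\ell$.

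Next I would iterate the monotonicity. Setting $d_\ell'=d_{\ell-1}'$ (the left endpoint of its allowed range) and collecting the coefficients of $d_{\ell-1}'$ produces exactly $\widetilde F_{\ell-1}$, in which the index $\ell$ has been decremented. So $\widetilde F_\ell\le \widetilde F_{\ell-1}\le\cdots\le \widetilde F_1$, and at the last step $d_1'\in[0,\infty)$, so setting $d_1'=0$ gives $\widetilde F_1\le \widetilde F_0=1-1/\widetilde B$. This is precisely the generic-$\mathbf a$ estimate of \S\ref{generic case} adapted to $g$: when $\mathbf a$ is generic all $d_\al'=0$, $\widetilde d=\widetilde e$, and $p_{min}\widetilde B\ge \widetilde e$ gives $\theta_g(\mathbf z(t))\le 1-1/\widetilde B$ directly. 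Since the bound holds for every admissible curve, $\theta_0(g)\le 1-1/\widetilde B$.

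Finally, for the equality clause when $g$ has a non-exceptional \L ojasiewicz monomial $z_{j_0}^{\widetilde B}$, I would repeat the construction used in Proposition 20 and Theorem 30: take $z_{j_0}(t)=t$ and $z_j(t)=t^N$ for $j\ne j_0$ with $N$ large. The non-exceptional hypothesis forces ${\ord}_t\,\partial g(\mathbf z(t))=\widetilde B-1$ and ${\ord}_t\,g(\mathbf z(t))=\widetilde B$, so $\theta_g(\mathbf z(t))=1-1/\widetilde B$, achieving the bound.

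The only point that requires genuine care is the book-keeping in the inductive step $\widetilde F_\ell\leadsto \widetilde F_{\ell-1}$: one must verify that after the substitution $d_\ell'=d_{\ell-1}'$, the coefficient $m_\ell+\cdots+m_k$ in the numerator of $\widetilde F_\ell$ combines correctly with the $m_{\ell-1}$-contribution of $\widetilde D_{\ell-1}'$ to produce the analogous form with $m_{\ell-1}+\cdots+m_k$, so that the same monotonicity lemma applies again. This is the reduced-case bookkeeping of Lemma 31 with $m_\al$-weighted variants and should go through verbatim; it is also the only place where one could conceivably mis-count and lose the clean final estimate $1-1/\widetilde B$.
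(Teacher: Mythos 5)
Your proposal is correct and follows essentially the same route as the paper: the paper likewise reduces to inequality (22), uses the negativity of $\partial\widetilde F_\ell/\partial d_\ell'$ (from $\widetilde B\ge m_1+\cdots+m_k$) to iterate $\widetilde F_\ell\le\widetilde F_{\ell-1}\le\cdots\le\widetilde F_0=1-1/\widetilde B$, and invokes the non-exceptional \L ojasiewicz monomial curve construction of \S 3.1 for the equality case. The bookkeeping point you flag is exactly what the paper compresses into ``by the exact same argument,'' so your reconstruction is faithful.
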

 We comment that $\widetilde B=\max\,\{m_1b_{1j}+\dots+m_k b_{kj}\,|\, j=1,\dots, n\}$.

\begin{thebibliography}{1}

\bibitem{Ab1}
 O. M. Abderrahmane.
  \textit{The \L ojasiewicz exponent for weighted homogeneous polynomial
              with isolated singularity}.
   {Glasg. Math. J.}
    {59}     {(2017),}
    {493-502}.
 		
\bibitem{Ab2}
  {O. M. Abderrahmane.}
    \textit{On the \L ojasiewicz exponent and {N}ewton polyhedron}.
  Kodai Math. J.
Vol. 28,
      {(2005)},
    {106-110}.
  \bibitem{Br}
 {S. Brzostowski.}
    \textit{The \L ojasiewicz exponent of semiquasihomogeneous
              singularities}.
  {Bull. Lond. Math. Soc.},
  {47}
      {(2015)},
    {848-852}.
    \bibitem{Br2}
 {S. Brzostowski.}
    \textit{\em A note on the \L ojasiewicz exponent of non-degenerate isolated
              singularities}, in 
  {Analytic and Algebraic Geometry 3},
   \L \'od\'z Univ. Press.
      {(2019)},
    {27-40}.

 \bibitem{B-K-O}
S. Brzostowski, T. Krasi\'nski and G. Oleksik.
\textit{A Conjecture on \L ojasiewicz exponent}
J. of Singularties, 6 (2012), 124-130.
\bibitem {EO} C. Eyral and M. Oka, 
{\em Non-compact Newton boundary and Whitney equisingularity for non-isolated singularities}, Adv. Math. \textbf{316} (2017) 94--113.

\bibitem{EO14}
C. Eyral and M. Oka.
{\em Geometry of non-degenerate locally tame non-isolated singularities.}
ArXiv:2005.01416v1, May, 2020.
\bibitem{Hamm}  H. Hamm
  {\em Lokale topologische Eigenschaften komplexer R\"aume},
 Math. Ann. 191, 1971, 235-252.
  

\bibitem{Kh2}
A. G. Khovanskii
{\em Newton polyhedra and the genus of complete intersections}
 Funkts. Anal. Prilozhen.
 12, 1977, No.1
  51-61.
    \bibitem{Ko}
  A. G. Kouchnirenko
{\em  Poly\`edres de Newton et Nombres de Milnor}
  Inventiones Math.
 32, 1976,  1-32.
 \bibitem{Kurd}
 K. Kurdyka.
     {\em On gradients of functions definable in o-minimal structures},
   {Ann. Inst. Fourier (Grenoble)},
     {48},
      {1998},
    No. {3},
      {769-783}.
    \bibitem{K-P}
    K. Kurdyka and A. Parusi\'{n}ski.
      {\em {${\bf w} $}-stratification of subanalytic functions and the
              \L ojasiewicz inequality},
    {C. R. Acad. Sci. Paris S\'{e}r. I Math.},
    {318},
       {1994},
    No. {2},
     {129--133}.
  \bibitem{Len}
A. Lenarcik.
\textit{On the \L ojasiewicz exponent of the gradient of a holomorphic function},
In: Singularities Symposium-\L ojasiewicz 70, Banach Center Publ. 44, PWN, Warszaw 1998, 149-429.

\bibitem {Milnor} J. Milnor.  {\em Singular points of complex hypersurfaces},  Annals of Math. Studies \textbf{61}, Princeton Univ. Press, Princeton, N. J., Univ. Tokyo Press, Tokyo, 1968.
\bibitem{Loi}
T. Loi.
{\em On the global \L ojasiewicz  inequalities for the class  of analytic logarithmic-exponential functions}, 
Ann. Inst. Fourier, Grenoble, 45-4 (1995), 951-971.
  \bibitem{L1}
  {S. \L ojasiewicz,}.
    {\em Semi-analytic sets},
 in {Global analysis and its applications ({L}ectures, {I}nternat.
              {S}em. {C}ourse, {I}nternat. {C}entre {T}heoret. {P}hys.,
              {T}rieste, 1972), {V}ol. {III}},
     {25--29},
     {1974}.
  
  \bibitem{L2}
  {S. \L ojasiewicz.}
   {\em Ensembles semi-analytiques}, Inst. Hautes \'Etudes Sci., Bures-sur-Yvette, 1965.
\bibitem{Okabook}
M. Oka.
\textit{Non-degenerate complete intersection singularity}.
\newblock Hermann, Paris, 1997.
\bibitem{OkaMix}
M. Oka.
\textit{Non-degenerate mixed functions.}
\newblock  {Kodai Math. J.}, 33 (2010), 1-62.
  \bibitem{O-Lo}
  M. Oka.
     {\em \L ojasiewicz exponents of non-degenerate holomorohic and mixed
              functions},
  {Kodai Math. J.},
 Vol. {41},
       {2018},
    {3},
    {620-651}.
   \bibitem{Oleksik}
  G. Oleksik.
\textit{The \L ojasiewicz exponent of nondegenerate surface
              singularities},
   {Acta Math. Hungar.},
  {138}
     ( 2013),
    {179-199}.
\bibitem{Oleksik2}
  G. Oleksik.
\textit{ The \L ojasiewicz exponent of nondegenerate singularities.}
Univ. Iag. Acta Math. 47 (2009), 301-308.
\bibitem{Te}
B. Tessier.
{\em Vari\'et\'es polaires des singularit\'es d'hypersurfaces.}
Invent. Math., 40(3) (1977), 267-292, 

\end{thebibliography}
\def\cprime{$'$} \def\cprime{$'$} \def\cprime{$'$} \def\cprime{$'$}
  \def\cprime{$'$} \def\cprime{$'$} \def\cprime{$'$} \def\cpri{$'$}

\end{document}